\let\OLDthebibliography\thebibliography
\renewcommand\thebibliography[1]{
  \OLDthebibliography{#1}
  \setlength{\parskip}{0pt}
  \setlength{\itemsep}{0pt plus 0.3ex}
}
\newtheorem{thm}{Theorem}[section]
\newtheorem{lemma}[thm]{Lemma}
\newtheorem{prop}[thm]{Proposition}
\newtheorem{cor}[thm]{Corollary}
\newtheorem{fundamental}[thm]{Fundamental Question}
\theoremstyle{definition}
\theoremstyle{remark}
\newtheorem{remark}[thm]{Remark}
\numberwithin{equation}{section}
\newcommand{\mmod}[1]{{\,\,\mathrm{mod}\,\,#1}}
\newcommand*\wrapletters[1]{\wr@pletters#1\@nil}
\def\wr@pletters#1#2\@nil{#1\allowbreak\if&#2&\else\wr@pletters#2\@nil\fi}
\def\alp{{\alpha}}
\def\del{{\delta}}
\def\lam{{\lambda}}
\def\eps{\varepsilon}
\def\le{\leqslant} \def\ge{\geqslant}
\def\d{{\,{\rm d}}}
\def \bE {\mathbb E}
\def \bN {\mathbb N}
\def \bP {\mathbb P}
\def \bQ {\mathbb Q}
\def \bR {\mathbb R}
\def \bZ {\mathbb Z}
\def \bT {\mathbb T}
\def \cA {\mathcal A}
\def \cE {\mathcal E}
\def \cL {\mathcal L}
\def \cS {\mathcal S}
\begin{document}
\title[Additive energy and the metric Poissonian property]{Additive energy and the metric Poissonian property}
\author[Thomas F. Bloom]{Thomas F. Bloom}
\address{Heilbronn Institute for Mathematical Research, Department of Mathematics, University of Bristol, 
University Walk, Bristol BS8 1TW, United Kingdom}
\email{matfb@bristol.ac.uk}
\author[Sam Chow]{Sam Chow}
\address{Department of Mathematics, University of York, Heslington, York YO10 5DD, United Kingdom}
\email{sam.chow@york.ac.uk}
\author[Ayla Gafni]{Ayla Gafni}
\address{Department of Mathematics, 915 Hylan Building, University of Rochester, Rochester, NY 14627}
\email{agafni@ur.rochester.edu}
\author[Aled Walker]{Aled Walker}
\address{Andrew Wiles Building, University of Oxford, Radcliffe Observatory Quarter, Woodstock Rd, Oxford OX2 6GG, United Kingdom}
\email{walker@maths.ox.ac.uk}
\subjclass[2010]{11J71, 11J83, 05B10, 11B30, 60F10}
\keywords{Pair correlations, distribution modulo 1, metric diophantine approximation, additive combinatorics, large deviations}
\date{}
\begin{abstract} Let $A$ be a set of natural numbers. Recent work has suggested a strong link between the additive energy of $A$ (the number of solutions to $a_1 + a_2 = a_3 + a_4$ with $a_i\in A$) and the metric Poissonian property, which is a fine-scale equidistribution property for dilates of $A$ modulo $1$. There appears to be reasonable evidence to speculate a sharp Khintchine-type threshold, that is, to speculate that the metric Poissonian property should be completely determined by whether or not a certain sum of additive energies is convergent or divergent. In this article, we primarily address the convergence theory, in other words the extent to which having a low additive energy forces a set to be metric Poissonian.
\end{abstract}
\maketitle

\section{Introduction}
\label{intro} 

The metric Poissonian property is a refined notion of the equidistribution of certain sequences on the unit circle. Initially studied in \cite{RS1998} for its connections to the Berry--Tabor conjecture in quantum mechanics, the property has recently received renewed interest, owing to the fundamental work of Aistleitner--Larcher--Lewko \cite{ALL2017} who, for the first time, revealed an intimate quantitative connection between this property and the combinatorial notion of \emph{additive energy}. We continue this quantitative investigation in one of our main theorems (Theorem \ref{MainThm}). 

In order to state the main result of \cite{ALL2017}, let us formally define the relevant notions. Let $\cA \subseteq \bN$ be an infinite set. For $X \to \infty$ a parameter, put 
\[
A = \cA \cap [1,X], \quad N = \# A, \quad \del = \del(X) = N/X.
\]For $\alpha\in [0,1)$, and $s>0$ some fixed parameter, we define the pair correlation function
\[
F(\alp, s, X, \cA) := N^{-1} \sum_{\substack{a,b \in A, a \ne b \\ \| \alp(a-b) \| \le s/N}} 1.
\]
Here and in the rest of the paper we will use $\|x\|:= \min_{n\in \bZ}|x-n|$. The parameters $s$, $X$, and the underlying set $\cA$ will often be suppressed, with $F(\alpha)$ used to denote the above. 

Considering $\alpha A$ as a subset of $\bR / \bZ$, the average gap length between consecutive elements is $1/N$, at least when $\alpha\notin \mathbb{Q}$. One can view the function $F(\alp)$ therefore as measuring the proportion of pairs $(\alpha a,\alpha b)$ of distinct elements such that the difference $\alpha a - \alpha b \mod 1$ is on the scale of this average gap length. This scale is determined by the parameter $s$. 

The set $\cA$ is said to be \emph{metric Poissonian} if for almost all $\alp$ we have
\[
F(\alp) \to 2s \qquad (N \to \infty)
\]
for all $s > 0$.\\

If $\cA$ is metric Poissonian, then for almost all $\alpha$ the set $\alpha A \mod 1$ mimics a certain statistic which holds almost surely for sequences $(x_1, x_2, \ldots)$ drawn uniformly and independently at random from $[0,1]$. Indeed, if $I \subseteq \bR / \bZ$ is a uniformly chosen random interval of length $2s/N$, let $Y_I$ denote the random variable $\vert I\cap \{x_1,\ldots,x_N\}\vert$. For fixed $k$ and large $N$ one has
\begin{align*}
\bP(Y_I = k) &= \frac{N!}{k!(N-k)!}(2s/N)^k(1-2s/N)^{N-k}\\
&=(1+o_{k,s}(1)) \cdot \exp(-2s) \cdot (2s)^k/k!,
\end{align*} 
so the limiting distribution of $Y_I$ is Poisson with parameter $2s$. In particular its variance tends to $2s$ as $N$ tends to infinity. 

When the points $x_1,x_2,\ldots,x_N$ are deterministic, calculating the variance of $Y_I$ is more or less equivalent to calculating the pair correlations 
\[ N^{-1} \sum\limits_{\substack{x_i,x_j \\ x_i\neq x_j \\ \Vert x_i - x_j\Vert\leqslant t/N}} 1.\] 
If for \textbf{all} $t > 0$ this tends to $2t$ as $N\rightarrow \infty$, then one may conclude that $\operatorname{Var}(Y_I)\rightarrow 2s$. The metric Poissonian property, therefore, is the statement that for almost all $\alpha$ the set $\alpha \cA \mod 1$ matches the second order statistics of a random sequence in $\bR / \bZ$, regarding distribution in short intervals of length $O(1/N)$. The statistics of a random sequence are Poisson, in the large $N$ limit, thus giving some explanation for the name `metric Poissonian'. For more details about this interpretation and connection, we direct the reader to the second section of \cite{Mar2007}.\\

When $\cA$ is the set of squares, or more generally the set of perfect $k^{\mathrm{th}}$ powers for $k\geqslant 2$, the set $\cA$ is metric Poissonian, as was shown by Rudnick and Sarnak in \cite{RS1998}. General lacunary sequences are also metric Poissonian (a result of \cite{RZ1999}). Considerations of continued fractions show that $\cA = \mathbb{N}$ is \emph{not} metric Poissonian, however. As described at the start of the paper, the situation was greatly clarified in \cite{ALL2017}, by the consideration of \emph{additive energy} \[E = E(X) = \# \{ (a,b,c,d) \in A^4: a+b = c+d \}.\] Indeed, the authors of \cite{ALL2017} proved the following theorem.
\begin{thm}
\label{ALLThm}
Assume that $E\ll_\xi N^{3-\xi}$ for some $\xi>0$. Then $\cA$ is metric Poissonian.
\end{thm}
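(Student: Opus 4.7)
The approach is the classical variance method coupled with Borel--Cantelli. First, by Fubini and the observation that $\int_0^1 \mathbf{1}_{\|m\alpha\|\le s/N}\,\d\alpha = 2s/N$ for every nonzero integer $m$, one has $\int_0^1 F(\alpha)\,\d\alpha = 2s(1 - 1/N) \to 2s$. The main work lies in bounding the variance $\sigma^2(X) := \int_0^1 \bigl(F(\alpha) - 2s\bigr)^2\,\d\alpha$ by a quantity that decays polynomially in $N$.

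To bound $\sigma^2(X)$, I would sandwich $\mathbf{1}_{\|x\|\le s/N}$ between bandlimited trigonometric majorants and minorants $\psi_\pm$ (of Beurling--Selberg/Vaaler type), bandlimited to $[-K,K]$ with $\int \psi_\pm = 2s/N \pm 1/K$. Writing $NF_\pm(\alpha) = \sum_{|k|\le K} c_k^\pm \bigl(|S(k\alpha)|^2 - N\bigr)$ with $S(\alpha) = \sum_{a\in A} e(a\alpha)$, Parseval together with the identity $\int_0^1 |S(k\alpha)|^2\,\d\alpha = N$ for $k\ne 0$ yields
\[
N^2 \mathrm{Var}(F_\pm) = \sum_{0 < |k_1|, |k_2| \le K} c_{k_1}^\pm c_{k_2}^\pm\, T(k_1, k_2),
\]
where $T(k_1, k_2) := \#\{(a,b,c,d) \in A^4 : k_1(a-b) + k_2(c-d) = 0,\ a\ne b,\ c\ne d\}$. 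The key combinatorial input is $T(k_1, k_2) \le E$ for all nonzero $k_1, k_2$: writing $g = \gcd(k_1, k_2)$ and $k_i = gk_i'$ with $(k_1', k_2') = 1$, the constraint forces $a-b = k_2'm$ and $c-d = -k_1'm$ for some nonzero $m \in \bZ$, so that $T(k_1, k_2) = \sum_{m\ne 0} r(k_1'm) r(k_2'm) \le \sum_n r(n)^2 = E$ by Cauchy--Schwarz, where $r(n) = \#\{(a,b) \in A^2 : a-b = n\}$. Combining this with the Parseval estimate $\sum_k |c_k^\pm|^2 \ll s/N$ for the diagonal $k_1 = k_2$, and a careful treatment of the off-diagonal exploiting the coprime decomposition of $(k_1,k_2)$ and the Fourier decay $|c_k^\pm|\ll \min(s/N, 1/|k|)$, one obtains $\sigma^2(X) \ll_{s,\xi} E(X)/N^{3-o(1)} \ll_{s,\xi} N^{-\xi+o(1)}$.

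With this variance bound, I would choose a subsequence $X_j \to \infty$ with $N_j := \#(\cA \cap [1, X_j]) \sim j^{2/\xi}$, so that $N_{j+1}/N_j = 1 + O(1/j) \to 1$ and $\sum_j \sigma^2(X_j) < \infty$. Chebyshev's inequality and the Borel--Cantelli lemma then give $F_{X_j}(\alpha; s) \to 2s$ for almost every $\alpha$, and for $X \in [X_j, X_{j+1}]$ the sandwich
\[
\tfrac{N_j}{N(X)}\, F_{X_j}\bigl(\alpha;\tfrac{sN_j}{N(X)}\bigr) \le F_X(\alpha; s) \le \tfrac{N_{j+1}}{N(X)}\, F_{X_{j+1}}\bigl(\alpha;\tfrac{sN_{j+1}}{N(X)}\bigr),
\]
combined with $N_{j+1}/N_j \to 1$, extends convergence to all $X$. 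Finally, applying this for $s$ in a countable dense subset of $(0, \infty)$ and using monotonicity of $F(\alpha;s)$ in $s$ together with continuity of the limit $2s$ yields the metric Poissonian property for $\cA$.

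The main obstacle is the variance bound. A naive Cauchy--Schwarz on the double Fourier sum produces $\sigma^2 \ll N^{-2} E (\sum_k |c_k|)^2 \sim N^{-2} E (\log N)^2$, which fails to decay under the hypothesis $E \ll N^{3-\xi}$. The technical heart of the argument is to exploit the coprime structure of $(k_1, k_2)$ and the Fourier decay of the $c_k^\pm$ (e.g., via GCD-sum type estimates) to show that the off-diagonal contribution is no larger than the diagonal $\sim sE/N^3$.
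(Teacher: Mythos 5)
First, note that the paper does not itself prove Theorem~\ref{ALLThm}: it is quoted from Aistleitner--Larcher--Lewko \cite{ALL2017}, and the paper's own contribution (Theorem~\ref{MainThm}) is a quantitative strengthening proved by a genuinely different route, namely Schmidt's trick of truncating the gcd in the approximating intervals. Your outline is essentially a reconstruction of the \cite{ALL2017} strategy rather than of anything in the present paper: the trigonometric sandwiching of the indicator, the reduction of $\mathrm{Var}(F_\pm)$ to the bilinear form $\sum_{k_1,k_2}c_{k_1}^\pm c_{k_2}^\pm T(k_1,k_2)$, the combinatorial identity $T(k_1,k_2)=\sum_{m\ne 0}r(k_1'm)r(k_2'm)\le \sum_n r(n)^2\le E$, and the Borel--Cantelli/sandwiching finish are all sound and correspond to the \cite{ALL2017} architecture (the sandwiching inequalities are in fact identical to those used in \S\ref{s6}).

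The genuine gap is exactly where you flag it, and flagging it does not fill it. The step ``exploiting the coprime decomposition of $(k_1,k_2)$ and the Fourier decay $|c_k^\pm|\ll\min(s/N,1/|k|)$, one obtains $\sigma^2\ll E/N^{3-o(1)}$'' is the entire technical content of \cite{ALL2017}, and it is not an elementary consequence of $T(k_1,k_2)\le E$ together with Cauchy--Schwarz; as you observe, the naive bound already loses logarithms. What is actually needed is a bound for the bilinear form in which the kernel is encoded by gcd data, and the tool that delivers it is the Bondarenko--Seip GCD-sum estimate \cite{BS2015} (or the sharper Lewko--Radziwi\l{}\l{} bound, cf.\ Remark~\ref{BigRemark}). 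That estimate does not give a loss of $N^{o(1)}$ by hand-waving: the saving is of the shape $\exp\bigl(O(\sqrt{\log M\,\log\log M})\bigr)$ with $M$ the number of $n$ with $r(n)\ne 0$, which is precisely why \cite{ALL2017} require a \emph{polynomial} saving $E\ll N^{3-\xi}$, and why pushing the threshold closer to $N^3$ (as in Theorem~\ref{MainThm}) requires the additional input of Schmidt's trick. In particular your closing sentence, that the off-diagonal ``is no larger than the diagonal $\sim sE/N^3$,'' overstates what is provable; one only gets this up to a sub-polynomial but super-logarithmic factor, and making even that precise is a nontrivial theorem, not a computation you can leave implicit. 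To turn this into a proof you would need either to import \cite[Theorem 1]{BS2015} in its weighted form (as in \cite[Lemma 3]{ALL2017}), or to reprove a sufficiently strong gcd-sum bound, and then carry through the Rankin-type device relating the $\alpha=1/2$ and $\alpha=1$ exponents to handle the Fourier tails at scale $|k|>N/s$.
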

\noindent We leave it to the reader to verify that this theorem implies the aforementioned results for perfect powers and lacunary sequences. We remark here that the trivial upper bound on $E$ is $N^3$, so the required hypothesis here is in some sense rather weak. 

This theorem is neatly complemented by the following result of Bourgain, given in the appendix of \cite{ALL2017}. 

\begin{thm}
Suppose $$\operatorname{limsup}\limits_{N\rightarrow \infty} \frac{E}{N^3}>0.$$ Then $\cA$ is not metric Poissonian. 
\end{thm}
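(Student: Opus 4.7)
The argument is a second-moment (variance) calculation on the torus. Let $r(m):=\#\{(a,b)\in A^2:a-b=m\}$, so that $\sum_m r(m)^2=E$, and let $B_m:=\{\alpha\in[0,1]:\|\alpha m\|\leq s/N\}$, a set of Lebesgue measure $2s/N$ for each $m\neq 0$. Then $F(\alpha) = N^{-1}\sum_{m\neq 0} r(m)\mathbf{1}_{B_m}(\alpha)$. We will work along a subsequence $X_k$ realising $\limsup E/N^3\geq c_0>0$, and fix a small parameter $s\in(0,c_0)$ to be chosen at the end.

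The mean is $\int_0^1 F(\alpha)\,d\alpha = N^{-1}\sum_{m\neq 0} r(m)(2s/N) = 2s(1-1/N)$. For the second moment, expand and integrate termwise:
\[
\int_0^1 F(\alpha)^2\,d\alpha = N^{-2}\sum_{m_1,m_2\neq 0} r(m_1)r(m_2)|B_{m_1}\cap B_{m_2}|.
\]
The ``diagonal'' contribution from $m_1 = \pm m_2$ equals $2N^{-2}\sum_{m\neq 0} r(m)^2 (2s/N) = 4s(E-N^2)/N^3$, using the symmetry $r(-m)=r(m)$, while the remaining intersection terms are non-negative. Therefore along the subsequence $X_k$,
\[
\int_0^1 (F(\alpha)-2s)^2\,d\alpha \;\geq\; 4s(c_0 - s) + o(1),
\]
which is bounded below by a positive constant for any fixed $s \in (0, c_0)$.

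The remaining task is to promote this variance lower bound to a failure of almost-everywhere convergence. The cleanest route is a Paley--Zygmund inequality applied to $F^2$: assuming a uniform bound $\int F^4\,d\alpha \leq C$ along the subsequence,
\[
\bigl|\bigl\{\alpha : F(\alpha)^2 > \theta \int F^2\bigr\}\bigr| \;\geq\; \frac{(1-\theta)^2 \bigl(\int F^2\bigr)^2}{\int F^4}.
\]
For any $\theta \in \bigl(s/(s+c_0),\,1\bigr)$ the threshold $\sqrt{\theta \int F^2}$ strictly exceeds $2s$, yielding some $\eta=\eta(s,c_0)>0$ with $F(\alpha, X_k) > 2s+\eta$ on a set of Lebesgue measure bounded below uniformly along the subsequence, contradicting $F(\cdot, X_k)\to 2s$ almost everywhere.

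The main obstacle is verifying the uniform quartic bound $\int F^4 \leq C$: this requires a fourfold Fourier expansion of the intersections $|B_{m_1}\cap\cdots\cap B_{m_4}|$, organised by the partition structure of the quadruple, together with bookkeeping of contributions from tuples of differences with large common factors. A more combinatorial alternative that avoids the $L^4$ bound altogether: pigeonholing on the hypothesis $E\geq c_0 N^3$ produces $\gtrsim N$ ``popular'' differences $m$ with $r(m)\gtrsim N$, and a direct second-moment argument applied to the counting function $\alpha\mapsto\#\{\text{popular }m:\alpha\in B_m\}$ identifies a set of positive Lebesgue measure on which $F(\alpha)$ already exceeds a fixed positive constant, contradicting the metric Poissonian property for all sufficiently small $s$.
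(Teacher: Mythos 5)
Note first that the paper does not prove this theorem: it is stated as a result of Bourgain, cited from the appendix of \cite{ALL2017}, so there is no in-paper argument to compare against and your sketch must stand on its own.

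Your opening variance computation is correct. Along a subsequence with $E/N^3\geq c_0$, one gets $\int_0^1(F-2s)^2\,d\alpha\geq 4s(c_0-s)+o(1)$, and this is indeed a natural place to start. The genuine gap is precisely the step you flag as the ``main obstacle'': a lower bound on $\int(F-2s)^2$ does not contradict $F(\cdot,s,X)\to 2s$ almost everywhere, because the $L^2$ mass can escape on a set of $\alpha$ of measure tending to zero, and neither of your two proposed routes rules this out.

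For the Paley--Zygmund route, the required uniform bound $\int_0^1 F^4\,d\alpha\leq C$ is simply false in general. Take $\cA=\bN$, so that $A=[N]$ and $E\asymp N^3$. For $\alpha\in(0,s/N^2)$ one has $\|m\alpha\|\leq s/N$ for every $|m|<N$, hence $F(\alpha)=N-1$ there, and so $\int_0^1 F^4\,d\alpha\gg N^4\cdot sN^{-2}\gg sN^2\to\infty$. Any attempt to salvage this by truncating $F$ reintroduces exactly the tail-control problem that the fourth moment was supposed to handle, so this is not bookkeeping; it is the substance of the theorem.

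For the popular-differences route, pigeonholing does give a set $D$ with $|D|\asymp N$ and $r(m)\gg N$ for $m\in D$, and with $G(\alpha):=\#\{m\in D:\|\alpha m\|\leq s/N\}$ one has $F\gg G$ and $\int G\asymp s$. But the ``direct second-moment argument'' you invoke does not give a set of positive measure. The second moment $\int G^2=\sum_{m,m'\in D}\mu(B_m\cap B_{m'})$ is a gcd sum over $D$ and can be $\gg s\log N$: for instance when $D=\{q,2q,\dots,Kq\}$ with $K\asymp N$ (which arises when $A$ is close to an arithmetic progression), the overlap computation gives $\int G^2\asymp s\log N$, so Paley--Zygmund with $\theta=0$ only yields $\mu(\{G\geq 1\})\gg s/\log N$, which is not bounded below. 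Whether $\mu\bigl(\bigcup_{m\in D}B_m\bigr)$ is in fact bounded below for every admissible $D$ is a nontrivial Diophantine question and is not established by what you write. In the arithmetic-progression case the failure of the metric Poissonian property is genuinely a continued-fraction phenomenon (as for $\cA=\bN$), not a ``positive measure of bad $\alpha$ per $N$'' phenomenon.

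Bourgain's actual argument supplies the missing ingredient through additive combinatorics: from the popular-difference hypothesis one extracts (Balog--Szemer\'edi--Gowers/Freiman) a large subset of $A$ with small doubling, and then analyses the Bohr-set/three-gap structure of dilates of that structured subset. The variance lower bound is the correct warm-up, but the part your sketch treats as routine is where the theorem actually lives.
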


These two theorems suggest a natural route for further investigation, namely to establish whether there is some threshold for the additive energy $E$ which completely determines whether or not $\cA$ is metric Poissonian. Bourgain, in the same appendix of \cite{ALL2017}, gave an example of a set $\cA$ with $E = o(N^3)$ which was not metric Poissonian. This demonstrates that, if such a threshold exists, it is not at order $N^3$. 

The next development was made by the fourth named author in \cite{Wal2017}, answering a question of Nair concerning the primes.
\begin{thm}
The set of prime numbers is not metric Poissonian.
\end{thm}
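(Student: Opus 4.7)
Proof proposal:

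The plan is to show that for a positive-measure set of $\alpha$, the pair correlation function $F(\alpha, s, X, \mathcal{P})$ fails to converge to $2s$ along a subsequence $X_k \to \infty$. The driving mechanism is that the primes concentrate in reduced residue classes modulo any small integer $q$, which inflates the exponential sum $S(\alpha) = \sum_{p \leq X} e(p\alpha)$ on major arcs. For $\alpha = a/q + \beta$ with $\gcd(a,q) = 1$ and $|\beta|$ small, $|S(\alpha)|^2 \asymp (N/\phi(q))^2$ times a local kernel, a factor $N/\phi(q)^2$ excess over the $L^2$ mean $N = \int_0^1 |S|^2$. This excess, once distributed over the relevant Fourier modes, should force $F(\alpha)$ away from the target value $2s$.

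Concretely, I would Fourier-expand the indicator $\mathbf{1}[\|\theta\| \leq s/N]$ to obtain
\[
F(\alpha, s, X) = 2s + O(1/N) + \frac{2}{N}\sum_{k \geq 1} \frac{\sin(2\pi ks/N)}{\pi k}\bigl(|S(k\alpha)|^2 - N\bigr),
\]
and seek a positive-measure set of $\alpha$ on which the Fourier tail is bounded away from zero. By Khintchine's theorem applied with $\psi(q) = 1/(q\log q)$, almost every $\alpha$ admits infinitely many reduced rational approximations $|\alpha - a/q| \leq 1/(q^2\log q)$; for each such pair $(a,q)$, I would choose a matching scale $X = X(q)$ so that the major arc at $a/q$ is covered by the unconditional circle-method estimates for primes, and so that many dilates $k\alpha$ (with $1 \leq k \leq N/s$) simultaneously fall close to rationals of small denominator, so that their contributions to the Fourier series add constructively. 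A careful summation should then produce a lower bound $F(\alpha, s, X(q)) \gg c(q)$ with $c(q) \to \infty$ along a suitable subsequence of denominators, and hence $F$ cannot tend to $2s$ along $X(q)$.

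The central obstacle is reconciling the sizes of the denominators produced by Khintchine's theorem (generically comparable to $\sqrt{X}$ or larger) with the unconditional circle-method regime for prime exponential sums (which requires $q \leq (\log X)^B$). This mismatch probably forces one to invoke the Bombieri--Vinogradov theorem to handle moduli $q$ up to $\sqrt{X}/\log^{B'} X$ on average, at the cost of settling for a positive-measure rather than full-measure conclusion. A further technical subtlety is the sign-oscillation of the Fourier weights $\sin(2\pi ks/N)/(\pi k)$, which can be addressed by replacing the sharp indicator of $\|\theta\| \leq s/N$ with a non-negative smoothed kernel of Fej\'er type; this yields a lower bound on a slightly modified pair correlation statistic that is still sufficient to preclude convergence to $2s$.
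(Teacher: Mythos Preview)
Your route via the Fourier expansion of $F$ and circle-method bounds on $|S(k\alpha)|^2$ is quite different from Walker's argument (cited by the paper, and whose core idea is reused in Section~\ref{s7}). That argument avoids exponential sums entirely. By the divergence case of Khintchine's theorem, almost every $\alpha$ admits infinitely many $M$ with $\|M\alpha\| < (M\,\mathcal{L}(M))^{-1}$ for a suitable slowly growing $\mathcal{L}$. For each such $M$ one selects a matching scale $X$ (hence $N$), and lower-bounds $F(\alpha)$ by restricting to differences $n = kM$ with $1 \le k \le K$, where $K$ is chosen so that $K\|M\alpha\| \le s/N$; for all such $n$ the condition $\|n\alpha\| \le s/N$ holds automatically. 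The quantity $\sum_{k \le K} r(kM)$ counts pairs of primes congruent modulo $M$ with gap at most $KM$, and is bounded below using nothing more than the pigeonhole fact that the primes up to $X$ occupy only $\phi(M)$ of the $M$ residue classes. With an appropriate balance of $N$, $K$, and $\mathcal{L}$ this forces $F(\alpha, s, X)$ away from $2s$ along the resulting subsequence; the divergence of $\sum (M\log M)^{-1}$ is exactly what makes the Khintchine input available.

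The obstacle you flag --- that Khintchine produces denominators far outside the Siegel--Walfisz range $q \le (\log X)^B$ --- is genuine for your approach but is an artefact of routing the argument through $|S(a/q + \beta)|$. That major-arc estimate needs small $q$ because it asserts \emph{equidistribution} of primes among the $\phi(q)$ reduced classes; Walker's argument needs only the much weaker, size-unrestricted fact that the primes \emph{lie} in those classes. Consequently there is no mismatch to reconcile, no call for Bombieri--Vinogradov (which in any case would yield only average information over moduli, not the pointwise control your Fourier sum needs at the single dilate $k\alpha$), and no need to smooth the kernel to tame sign oscillations. Your plan is not obviously unworkable, but the difficulties you yourself identify are real and the fixes speculative; the direct counting argument simply sidesteps all of them.
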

\noindent In this instance, one has $E \sim N^3(\log N)^{-1}$. The proof of this theorem made use of the divergent part of Khintchine's theorem on Diophantine approximation, and rested on the divergence of the sum $\sum (N\log N)^{-1}$. At this point, we speculate that if $E \sim N^3 \psi(N)$, for some decreasing function $\psi$, then $\cA$ is metric Poissonian if and only if $\sum \psi(N)/N$ converges. 

Lachmann and Technau \cite{Tec} have recently given examples of sets which are not metric Poissonian, and which come within an arbitrary finitely iterated logarithm of this putative threshold. For $r \in \bN$, they showed that there exists a set $\cA$ which is not metric Poissonian, and which satisfies
\[
E \asymp \frac{ N^3} {\log(N) \log_2(N) \cdots \log_r(N)}.
\]
Here $\log_0(N) = N$ and $\log_t(N) = \log (\log_{t-1}(N))$ for $t \in \bN$. Here and throughout, we assume that $N$ is large enough so that any iterated logarithms appearing are well-defined and positive.\\

We now come to the matter of the present paper. Firstly, we offer a large improvement over the results of \cite{ALL2017}, albeit under an extra hypothesis. Indeed, a key feature of Lachmann and Technau's work is that their constructed set is extremely sparse. Assuming instead that $\mathcal{A}$ is quite dense, we show that if $E\ll_\xi N^3(\log N)^{-2-\xi}$ then $\mathcal{A}$ is metric Poissonian. This can be considered a complement to Lachmann and Technau's construction. 

\begin{thm} \label{MainThm}
Assume that 
\begin{equation} \label{conditions}
E \ll_\xi  \frac{N^3} {(\log N)^{2 + \xi}},
\qquad \del \gg_\xi \frac1{ (\log N)^{2+2\xi}},
\end{equation}
for some constant $\xi > 0$.
Then $\cA$ is metric Poissonian.
\end{thm}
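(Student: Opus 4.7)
The plan is to refine the second moment and Borel--Cantelli scheme of Aistleitner--Larcher--Lewko \cite{ALL2017}, with substantially tighter control over logarithmic losses. Fix $s > 0$ and write $\chi = \mathbf{1}_{[-s/N, s/N]}$, $r(n) = \#\{(a,b) \in A^2 : a-b = n\}$. Expanding $\chi$ in a Fourier series and applying Parseval yields the exact variance identity
\[
\int_0^1 \Bigl(F(\alpha) - \frac{2s(N-1)}{N}\Bigr)^2 d\alpha = N^{-2} \sum_{k_1, k_2 \ne 0} \hat\chi(k_1) \hat\chi(k_2) M(k_1, k_2),
\]
where $M(k_1, k_2) = \#\{(a,b,c,d) \in A^4 : a \ne b, \, c \ne d, \, k_1(a-b) = k_2(c-d)\}$. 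The diagonal $k_1 = k_2$ contributes $N^{-2}(E-N^2)\sum_{k \ne 0}\hat\chi(k)^2 \ll sE/N^3$, which under $E \ll_\xi N^3/(\log N)^{2+\xi}$ is $\ll s/(\log N)^{2+\xi}$.

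The technical crux is to show that the off-diagonal contribution is of comparable order, up to a factor of $(\log N)^c$ with $c < 1+\xi$. I would replace $\chi$ by a Beurling--Selberg majorant $\chi_+$ of Fourier degree $K$ (a small power of $N$), so that $k$-sums become finite and the error $\|\chi_+ - \chi\|_{L^1} \ll 1/K$ is negligible in the mean. The analysis then exploits the arithmetic of the constraint $k_1(a-b) = k_2(c-d)$: writing $g = \gcd(k_1, k_2)$ and $k_i = g l_i$ with $\gcd(l_1,l_2) = 1$, this constraint becomes $(a-b, c-d) = (m l_2, m l_1)$ for some $m \ne 0$, giving
\[
M(k_1, k_2) = \sum_{m \ne 0} r(m l_2) r(m l_1).
\]
Combined with the decay $|\hat\chi_+(k)| \ll \min(s/N, 1/|k|)$ and the support constraint $r(m l_i) = 0$ for $|m l_i| > X$, one decomposes the $(k_1, k_2)$-sum dyadically according to $\gcd(k_1, k_2)$ and $\max(|l_1|, |l_2|)$; careful bookkeeping should recover a bound just within tolerance.

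Granted such a variance bound, Chebyshev plus Borel--Cantelli give $F(\alpha; s, X_j(\eta)) \to 2s$ almost surely, where $X_j(\eta)$ is defined by $N(X_j(\eta)) = \lceil (1+\eta)^j \rceil$ for each rational $\eta > 0$. The density hypothesis $\delta \gg_\xi (\log N)^{-2-2\xi}$ enters here by guaranteeing $\log N \asymp \log X$ along the $X$-axis, so that the variance bound expressed in terms of $N$ translates into one summable along the geometric sequence $X_j(\eta)$. The passage to all $X \to \infty$ then follows from the bracketing monotonicity
\[
\frac{1}{1+\eta} F\!\left(\alpha; \tfrac{s}{1+\eta}, X_{j-1}(\eta)\right) \le F(\alpha; s, X) \le (1+\eta) F\!\left(\alpha; (1+\eta) s, X_{j+1}(\eta)\right)
\]
for $X \in [X_j(\eta), X_{j+1}(\eta))$; taking $\eta \to 0$ through rationals, and then extending to all $s > 0$ by the monotonicity of $F$ in $s$ together with countable density, completes the argument.

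The main obstacle is the off-diagonal variance estimate. In \cite{ALL2017}, the polynomial saving in $E$ leaves ample room for an arbitrary fixed power of $\log N$ to be lost; here, by contrast, essentially every factor of $\log N$ must be tracked, which requires the Selberg degree $K$ to be calibrated precisely against $s$ and $N$, and the divisor structure of $(k_1, k_2)$ to be used to trade off decay of $\hat\chi_+$ against the support of $r$.
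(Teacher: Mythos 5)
The strategy you outline is essentially the Aistleitner--Larcher--Lewko \cite{ALL2017} method (Parseval/second moment, a smoothing of the window function, a gcd decomposition of the Fourier sum, and sandwiching plus Borel--Cantelli). Unfortunately there is a genuine gap where you write that ``careful bookkeeping should recover a bound just within tolerance'' for the off-diagonal contribution. That off-diagonal bound \emph{is} the theorem; it is not bookkeeping. After the gcd decomposition $k_i = g l_i$ and an application of Cauchy--Schwarz in the $m$-variable, the off-diagonal term reduces to a weighted GCD sum of the form $\sum_{k,\ell} c_k c_\ell (n_k,n_\ell)/\sqrt{n_k n_\ell}$, i.e.\ the exponent-$1/2$ case of G\'al-type sums. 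The sharp bounds here are those of Bondarenko and Seip \cite{BS2015,BS2017}, and they saturate at a loss of $\exp(C\sqrt{\log M\log\log M})$. This is exactly why, as the paper remarks at the start of Section~\ref{s2}, the \cite{ALL2017} method ``reaches its natural limit once $\widetilde{E}$ is larger than $\exp(-(\log N)^{1/2+\xi})$,'' which is very far from the $\widetilde{E}\ll (\log N)^{-2-\xi}$ regime of Theorem~\ref{MainThm}. No choice of Beurling--Selberg degree $K$ or dyadic slicing of $(k_1,k_2)$ avoids this barrier, since the $\alpha=1/2$ GCD sum appears for structural reasons independent of the smoothing.

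The paper's escape is Schmidt's trick: one replaces $F$ by $F^*$, obtained by discarding, for each denominator $n$, the intervals $(\frac{u-s/N}{n},\frac{u+s/N}{n})$ with $(u,n)>T$. This costs only an $L^1$ error $O(T^{-1}\sqrt{\log T}\,(\widetilde{E}\delta^{-1})^{1/2})$ (Proposition~\ref{close}), but it rigs the overlap count $A(m,n)$ so that whenever $A(m,n)\neq 0$ one has $m/(m,n),\,n/(m,n)\le T$. That extra constraint is exactly what lets one pass from exponent $1/2$ to exponent $1$ (Rankin's trick / a direct Cauchy--Schwarz over dilates $gy$, $y\le T$), landing on G\'al-scale bounds $O(\widetilde{E}\, T\log T)$ rather than Bondarenko--Seip-scale bounds; the optimal $T=(\widetilde{E}\delta)^{-1/4}$ then gives a variance of size $(\log N)^{-1-\xi/5}$, which is summable along a sub-geometric sequence $N_j=\lfloor 2^{j^{1-\eta}}\rfloor$. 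Without an analogue of this truncation, your approach cannot close.

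A secondary point: you misidentify the role of the density hypothesis $\delta\gg(\log N)^{-2-2\xi}$. It is not merely to ensure $\log N\asymp\log X$. It enters quantitatively in the $L^1$ cost of Schmidt's trick, where Cauchy--Schwarz over $n\le X$ produces a factor $\sqrt{X}=\sqrt{N/\delta}$, giving the $\delta^{-1/2}$ in $\frac{\sqrt{\log T}}{T}(\widetilde{E}\delta^{-1})^{1/2}$. If $\delta$ were too small this $L^1$ loss would swamp the gain from choosing $T$ large, which is also why a density condition cannot simply be dropped here (Lachmann--Technau's non-metric-Poissonian examples are extremely sparse).
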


If $\mathcal{A}$ were `random', e.g. each element of $\{ 1,2,\ldots, X\}$ chosen independently at random with probability $\delta$, we would expect the energy to be approximately $\delta N^3$. In the above theorem we permit $\delta$ to be smaller, by a power of a logarithm. Thus the theorem holds for all sets of energy $E\ll_\xi N^3 (\log N)^{-2-\xi}$ bar those which are unexpectedly sparse. 

For genuinely random sets $\cA$, certain technical obstructions in the proof may be removed, and we obtain the following quantitatively stronger result.

\begin{thm} \label{RandomThm}
Let $C>2$. Let $\cA$ be a random set of natural numbers defined by choosing $x \in \cA$ independently at random with probability 
\[
\begin{cases}
1, &\text{if } x \le 20,\\
(\log x)^{-1} (\log \log x)^{-C}, &\text{if } x > 20.
\end{cases}
\]
Then, with probability 1, the set $\mathcal{A}$ is metric Poissonian. 
\end{thm}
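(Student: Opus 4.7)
The strategy mirrors the framework behind Theorem~\ref{MainThm}: establish a summable $L^2$-variance bound for $F(\alp;X) - 2s$ along a geometric subsequence, apply Borel--Cantelli, and interpolate to all $X$. The random structure of $\cA$ provides two advantages over the deterministic setting---tight concentration of $N$ and $E$ around their means, and the absence of the additive structure that forces extra logarithmic losses in the proof of Theorem~\ref{MainThm}. Fix a rational $\eta > 0$ and set $X_j := \lfloor (1+\eta)^j \rfloor$.

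\smallskip
\emph{Step 1 (concentration).} By partial summation $M(X) := \bE[N(X)] \asymp X/((\log X)(\log\log X)^C)$. Since $N(X)$ is a sum of independent Bernoullis, Chernoff together with Borel--Cantelli along $(X_j)$ yields $N(X_j) \sim M(X_j)$ almost surely, and monotonicity gives $N(X) \asymp M(X)$ for all large $X$. A direct calculation gives
\[
\bE[E(X)] \;=\; \sum_{\substack{a+b=c+d \\ a,b,c,d \in [1,X]}} p(a)p(b)p(c)p(d) \;\asymp\; X^3 p(X)^4 \;\asymp\; \frac{N(X)^3}{(\log X)(\log\log X)^C},
\]
the trivial contribution $\{a,b\}=\{c,d\}$ being $O(N^2)$. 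A second-moment calculation, classifying 8-tuples appearing in $\bE[E(X)^2]$ by their overlap pattern under the two additive constraints (or invoking Kim--Vu polynomial concentration), shows $\operatorname{Var}(E(X)) \ll \bE[E(X)]^2 / (\log X)^{100}$, so Chebyshev plus Borel--Cantelli gives $E(X_j) \sim \bE[E(X_j)]$ almost surely.

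\smallskip
\emph{Step 2 ($L^2$-variance and Borel--Cantelli).} The Fourier-analytic computation of $\int_0^1 (F(\alp;X) - 2s)^2 \d\alp$ from \cite{ALL2017} bounds the integrand by a constant multiple of $s^2 E(X)/N(X)^4$, plus lower-order terms expressible as sums of the form $N^{-2} \sum_d r_A(d)^2 \min(1,sN/d)$. For random $\cA$ these lower-order sums are themselves concentrated around their means by independence, avoiding the logarithmic losses present in the deterministic argument behind Theorem~\ref{MainThm}. Since $E(X_j)/N(X_j)^4 \asymp 1/X_j = (1+\eta)^{-j}$, we obtain
\[
\sum_{j \ge 1} \int_0^1 (F(\alp;X_j) - 2s)^2 \d\alp \;\ll_{s,\eta}\; \sum_{j \ge 1} (1+\eta)^{-j} \;<\; \infty,
\]
so Borel--Cantelli gives $F(\alp;X_j) \to 2s$ for a.e.\ $\alp$ and each $s \in \bQ_{>0}$.

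\smallskip
\emph{Step 3 (interpolation and conclusion).} For $X \in [X_j, X_{j+1}]$ the inclusions $A(X_j) \subseteq A(X) \subseteq A(X_{j+1})$ give the sandwich
\[
\tfrac{N(X_j)}{N(X)} F\bigl(\alp; X_j,\, s \tfrac{N(X_j)}{N(X)}\bigr) \;\le\; F(\alp; X, s) \;\le\; \tfrac{N(X_{j+1})}{N(X)} F\bigl(\alp; X_{j+1},\, s \tfrac{N(X_{j+1})}{N(X)}\bigr),
\]
with $N(X_{j+1})/N(X) \in [1, 1+\eta + o(1)]$ almost surely. Letting $\eta \to 0$ through a countable sequence (and $s$ through $\bQ_{>0}$) yields $F(\alp; X, s) \to 2s$ for all $X$, all $s > 0$, and a.e.\ $\alp$. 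The principal obstacle is Step~2: verifying that the Fourier-side lower-order terms are genuinely absorbed on a random set, so that one pays only the $O(1/X)$ bound on $E/N^4$ available here rather than the $(\log N)^{2+\xi}$ factor Theorem~\ref{MainThm} demands---this is where the independence of the Bernoulli inclusion indicators is used most sharply.
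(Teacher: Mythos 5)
Your Step~2 contains a fatal error, and it is the crux of the whole argument. The claimed bound $\int_0^1 (F(\alp;X)-2s)^2\,\d\alp \ll s^2 E/N^4$ is false, and no computation in \cite{ALL2017} gives it. Expanding the square, $\int_0^1 F(\alp)^2\,\d\alp = \frac{4}{N^2}\sum_{m,n\le X} r(m)r(n)\mu(I_m\cap I_n)$ with $I_n=\{\alp:\|n\alp\|\le s/N\}$; subtracting $(\int F)^2=\frac{4}{N^2}\sum_{m,n}r(m)r(n)\frac{4s^2}{N^2}$, the diagonal $m=n$ alone contributes
\[
\frac{4}{N^2}\sum_{n\le X} r(n)^2\Bigl(\frac{2s}{N}-\frac{4s^2}{N^2}\Bigr)\asymp \frac{s}{N^3}\sum_n r(n)^2 \asymp s\,\widetilde E = \frac{sE}{N^3},
\]
which is larger than $s^2 E/N^4$ by a factor of $N/s$, and the off-diagonal GCD corrections only make the variance bigger. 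Two independent sanity checks confirm the error. First, for $\cA=\bN$ one has $E/N^4\asymp 1/X$, so your bound plus your Step~3 sandwich would prove $\bN$ metric Poissonian, which is false. Second, your proof never uses the hypothesis $C>2$: the claimed Borel--Cantelli sum $\sum_j (1+\eta)^{-j}$ converges for every $C\ge 0$, so your argument, if correct, would contradict Theorem~\ref{RandomDivergence}, which shows the conclusion fails for $0\le C\le 1$.

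The deeper issue is that $\widetilde E = E/N^3 \asymp (\log X)^{-1}(\log\log X)^{-C}$ here, and the off-diagonal GCD sums controlling $\operatorname{Var}(F)$ lose a factor on the order of $\exp(c\sqrt{\log N\log\log N})$ via Bondarenko--Seip, which swamps the $(\log N)^{-1}$ gain. Independence of the Bernoulli indicators of $\cA$ cannot rescue this, since the GCD-sum loss is a deterministic feature of the measure of overlap of the Bohr-type sets $I_m\cap I_n$ in $\alp$, not a concentration phenomenon in $\cA$. The paper's remedy is Schmidt's trick (Section~\ref{s2}): replace $F$ by $F^*$, which discards pairs $(u,n)$ with $(u,n)>T$, then bound $\int|F-F^*|$ (Proposition~\ref{close random}, giving $\ll T^{-1}$) and $\operatorname{Var}(F^*)$ (Proposition~\ref{variance random}, giving $\ll T^{-1}+(\log X)^{-1}(\log\log X)^{-C}\log T$). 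The choice $T=\log^2 X$ yields $\operatorname{Var}(F^*)\ll(\log X)^{-1}(\log\log X)^{1-C}$, and it is exactly the extra $\log T\asymp\log\log X$ factor from the truncation that forces $C>2$ for Borel--Cantelli to close along $X_j=\lfloor 2^{j\eta}\rfloor$. Your Steps~1 and~3 are sound and essentially match the paper's Lemma~\ref{Chernoff lemma}, Corollary~\ref{flatrep}, Appendix~\ref{EnergyRandom}, and the sandwiching in Section~\ref{s6}; Step~2 must be replaced wholesale by the $F^*$ machinery.
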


\noindent In this theorem we have conceded a factor of roughly $\log \log N$ from the `Khintchine threshold'. Indeed, we verify in Appendix \ref{EnergyRandom} that
\[
E \asymp \frac{N^3}{(\log N) \cdot (\log \log N)^C}
\]
with probability 1.

In the random setting we can also tackle the divergence side of the problem.
\begin{thm} \label{RandomDivergence}
Let $0 \le C \le 1$. Let $\cA$ be a random set of natural numbers defined by choosing $x \in \cA$ independently at random with probability 
\[
\begin{cases}
1, &\text{if } x \le 20,\\
(\log x)^{-1} (\log \log x)^{-C}, &\text{if } x > 20.
\end{cases}
\]
Then, with probability 1, the set $\mathcal{A}$ is \textbf{not} metric Poissonian. 
\end{thm}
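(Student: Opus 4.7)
Our plan is to adapt the strategy used by Walker \cite{Wal2017} for the primes: use the divergent half of Khintchine's theorem to find an $\alpha$ for which $\|q\alpha\|$ is small for infinitely many $q$, and then combine this with arithmetic structure in $\mathcal{A}$ to produce an abundance of small pair-differences contributing to $F(\alpha,s)$. The new ingredient over \cite{Wal2017} is that, since $\mathcal{A}$ is random, the required arithmetic structure must be extracted by concentration of measure.

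Since $0 \le C \le 1$, the substitution $u = \log\log q$ shows that $\sum_{q\ge 3}(q\log q (\log\log q)^C)^{-1} = \infty$. Khintchine's divergence theorem, applied to the monotone function $\psi(q) = (q\log q(\log\log q)^C)^{-1}$, provides for a.e.\ $\alpha \in [0,1)$ an infinite set $\mathcal{Q}(\alpha) \subseteq \bN$ with $\|q\alpha\| \le \psi(q)$ for every $q \in \mathcal{Q}(\alpha)$. Fix such an $\alpha$, fix a small $s \in (0, 1/100)$, and for each $q \in \mathcal{Q}(\alpha)$ set
\[
X_q := \lfloor 2\sqrt{s}\, q\log q(\log\log q)^C\rfloor, \qquad K_q := \lfloor \tfrac{1}{2}\sqrt{s}\,\log q(\log\log q)^C\rfloor.
\]
Since $p(x) \asymp (\log x)^{-1}(\log\log x)^{-C}$ one computes $\bE N_q \asymp \sqrt{s}\, q$. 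Any pair $(a,b)\in(\mathcal{A}\cap[1,X_q])^2$ with $a-b = kq$ and $1 \le k \le K_q$ therefore satisfies $\|\alpha(a-b)\| \le K_q\|\alpha q\| \le s/(2\bE N_q)$; when additionally $N_q \le 2\bE N_q$, this is at most $s/N_q$, so the pair is counted by $F(\alpha,s,X_q,\mathcal{A})$. Writing $P_q$ for the number of such pairs, a short calculation using independence yields $\bE P_q \asymp sq$, hence (modulo concentration) $F(\alpha,s,X_q,\mathcal{A}) \ge P_q/N_q \gg \sqrt{s} > 2s$, which contradicts metric Poissonianity once $s$ is small enough.

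It remains to show, with probability one over $\mathcal{A}$, that $N_q \in [\bE N_q/2,\, 2\bE N_q]$ and $P_q \ge \bE P_q/2$ for \emph{all} sufficiently large $q$. For $N_q$ this is a standard Chernoff--Hoeffding estimate. For $P_q$, each Bernoulli $\mathbf{1}_{n\in\mathcal{A}}$ appears in at most $2K_q$ of the pair-terms, so the bounded-difference (Azuma) inequality gives
\[
\bP\bigl(|P_q - \bE P_q| > \tfrac{1}{2}\bE P_q\bigr) \ll \exp\!\bigl(-c(\bE P_q)^2/(X_q K_q^2)\bigr) \ll \exp\!\bigl(-c'\sqrt{s}\, q/(\log q)^3(\log\log q)^{3C}\bigr),
\]
which is (very) summable in $q$. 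Borel--Cantelli furnishes the desired uniformity, and Fubini (exploiting that Khintchine's input and the Borel--Cantelli input depend on disjoint sources of randomness) concludes that with probability one over $\mathcal{A}$, for a.e.\ $\alpha$, $\limsup_{X\to\infty} F(\alpha,s,X,\mathcal{A}) \gg \sqrt{s} > 2s$, so $\mathcal{A}$ is not metric Poissonian. The main technical obstacle is exactly this concentration step: a na\"\i ve Chebyshev bound gives only $\bP(|P_q - \bE P_q| > \bE P_q/2) = O((sq)^{-1})$, which is not summable in $q$. Exploiting the bounded-difference structure of $P_q$ via Azuma rather than through second moments is what makes the argument close.
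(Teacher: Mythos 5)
Your argument is correct in structure but takes a genuinely different route from the paper's. The paper reuses the concentration machinery already built for Theorem \ref{RandomThm}: by (adapted versions of) Lemma \ref{Chernoff lemma} and Corollary \ref{flatrep}, with probability $1$ one has $r(n) \ge \tfrac18 \del N$ uniformly for $1 \le n \le X/3$, and Khintchine is then applied with the slowly decaying function $\psi(M) = (M \log M \log_2 M \log_3 M \log_4 M \log_5 M)^{-1}$. Summing $r(kM)$ over $k \le K$ with $K = \log M (\log\log M)^C \log_4 M$ then gives $F \gg \del K \gg \log_4 M \to \infty$, so the paper obtains the stronger conclusion $\limsup F = \infty$ for a.e.\ $\alpha$. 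You instead derive a fresh concentration estimate for the aggregate pair count $P_q$ via the bounded-differences (McDiarmid/Azuma) inequality, bypassing the need to control each $r(n)$ individually, and use the cleaner Khintchine function $\psi(q)=(q\log q(\log\log q)^C)^{-1}$ (where $C\le 1$ makes $\sum\psi(q)$ diverge), obtaining the weaker but sufficient conclusion $\limsup F \ge c\sqrt s > 2s$ for $s$ small. Your observation that Chebyshev on $P_q$ only gives $O((sq)^{-1})$, which is not summable, is accurate and is the genuine reason the bounded-differences inequality is needed here; the paper sidesteps this entirely by conditioning on the pointwise event $r(n)\gg\delta N$, which is already available from section \ref{s5}.

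One small arithmetic slip: with your choices $X_q = \lfloor 2\sqrt s\, q \log q(\log\log q)^C\rfloor$ and $K_q = \lfloor\tfrac12\sqrt s\,\log q(\log\log q)^C\rfloor$, one has $\bE N_q \sim 2\sqrt s\, q$ and hence $s/(2\bE N_q) \sim \sqrt s/(4q)$, whereas $K_q\|\alpha q\| \le K_q\psi(q) \le \sqrt s/(2q)$, so the claimed inequality $K_q\|\alpha q\|\le s/(2\bE N_q)$ fails by a factor of $2$. This is cosmetic: shrinking the constant in $K_q$ (say to $\tfrac18$) repairs it, at the cost of shrinking the constant $c$ in $F \ge c\sqrt s$, which in turn forces $s$ to be somewhat smaller than $1/100$ --- but you have already built in the freedom to take $s$ small, so the argument closes once the numerical constants are tracked carefully.
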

\noindent This complements Theorem \ref{RandomThm}. In a way, Theorem \ref{RandomDivergence} demonstrates that the examples provided in \cite{Tec} behave as one should expect.\\

We return to our fundamental motivating question: is there a universal `additive energy threshold' for the metric Poissonian property? 
\begin{fundamental}
Is it true that if $E(N) \sim N^3 \psi(N)$, for some weakly decreasing function $\psi: \bN \to [0, 1]$, then $\cA$ is metric Poissonian if and only if $\sum \psi(N)/N$ converges?
\end{fundamental}
\noindent Theorem \ref{RandomThm} shows that there are metric Poissonian sets with energy
\[
\Theta \Bigl( \frac{N^3}{(\log N) \cdot (\log \log N)^{2+\eps}} \Bigr),
\]
whilst Lachmann and Technau \cite[Corollary 1]{Tec} construct sets of energy
\[
\Theta \Bigl( \frac{N^3}{(\log N) \cdot(\log \log N)} \Bigr)
\]
that are not metric Poissonian. In a way, we are within a doubly logarithmic factor of answering the question. Moreover, there is now strong evidence that the Khintchine threshold is the correct one, if such a threshold exists.\\

The paper is organised as follows. In section \ref{s2} we describe our method, introducing an idea of Schmidt which lies at the heart of the approach. The subsequent two sections are devoted to proving the key estimates. In section \ref{s5} we demonstrate how these estimates may be improved in the setting of Theorem \ref{RandomThm}. In section \ref{s6}, we use a sandwiching idea we learnt from \cite{ALL2017} to finish the proofs of Theorems \ref{MainThm} and \ref{RandomThm}. Then, in section \ref{s7}, we use a simplified version of Walker's method \cite{Wal2017} to provide an abridged proof of Theorem \ref{RandomDivergence}.
Finally, in Appendix \ref{EnergyRandom}, we compute the additive energy in the random setting.

Regarding notation, throughout $\mu$ denotes Lebesgue measure. For $n \in \bN$ write $[n] = \{ 1,2,\ldots, n\}$. We use Vinogradov and Bachmann--Landau notation, and we put $\bT = [0,1]$. It is convenient to introduce the notation $\widetilde{E}$ for the normalised additive energy $E/N^3$, so that $\widetilde{E}\in (0,1]$. \\

\noindent \textbf{Acknowledgements}

This material is based upon work partially supported by the National Science Foundation under Grant No. DMS-1440140 while three of the authors were in residence at the Mathematical Sciences Research Institute in Berkeley, California, during the Spring 2017 semester. The second and third authors were MSRI Postdocs, and the fourth author a Program Associate, during the Analytic Number Theory Program. The first named author was visiting the Simons Institute for the Theory of Computing, Berkeley, for the Pseudorandomness Program 2017. All four authors are grateful for the working conditions provided by both institutions, at which this work was initiated. The second named author was also supported by EPSRC Programme Grant EP/J018260/1, and the fourth named author by EPSRC grant no. EP/M50659X/1. We thank Ben J. Green and Niclas Technau for helpful conversations. Finally, we thank an anonymous referee for a helpful and detailed report.

\section{Schmidt's trick} \label{s2}

The arguments of Aistleitner, Larcher, and Lewko in \cite{ALL2017} rest upon the bounds of Bondarenko and Seip on GCD-sums \cite{BS2015}, which reach their natural limit once $\widetilde{E}$ is larger than $ \exp((\log N)^{-1/2 + \xi})$. To move above this threshold, we utilise a device of Schmidt from \cite{Sch1960}, which we learnt from Chapter 4 of Harman's book \cite{Har1998}. The idea is that, when counting integer pairs $(n,m)$ such that $\vert \alpha n -m\vert$ is small, one can profitably split into two cases, depending on the size of greatest common divisor of $m$ and $n$. The resulting sum enjoys much better $L^2$ information than was available in \cite{ALL2017}, at a small $L^1$ cost. 

We note at the outset that we will only consider values of $s > 0$ for which $s \asymp 1$. Let
\[
r(n) = \# \{ (a,b) \in A^2: a-b = n, a \ne b \}
\]
and
\[
\cE_n = \bigcup_{\substack{u \le n \\ (u,n) \le T}} \Bigl( \frac{u-s/N}n, \frac{u+s/N}n \Bigr) \mmod 1,
\]
where $T \ge 2$ is a threshold to be specified in due course. Observe that
\[
F(\alp) = \frac{2}{N} \sum_{\substack{n \le X\\\| n \alp \| \le s/N }} r(n).
\]
The idea is to replace $F(\alp)$ by
\[
F^*(\alp) = F^*(\alp, s, X, \cA) = \frac{2}{N}\sum_{\substack{n \le X\\\alp \in \cE_n}} r(n).
\]

We follow the exposition of Harman very closely, though the inclusion of the weight $r(n)$ means we need to redo much of the argument from Chapter 4 of \cite{Har1998}. Another difference is that the threshold $T$, as well as the parameter $2s/N$ of the intervals, is uniform, depending on $N$ rather than $n$. This will actually simplify the proof of several lemmas. 

We first establish two useful lemmas on the size of the sets $\cE_n$. To this end, we introduce the quantity
\[
\Phi(n) =  \# \{u \in [n]: (u,n) \le T \}
\]
and observe that
\begin{equation}\label{intsize}
\mu(\cE_n) = \frac{2s}{N}\cdot \frac{\Phi(n)}{n}.
\end{equation}
The second moment of $\Phi(n)$ will be particularly useful here, but in section 5 we will also have use for the first moment, when considering the random case Theorem \ref{RandomThm}.

\begin{lemma} \label{PsiAvg}
We have
\[
\sum_{n \le X} \Bigl(1 - \frac{\Phi(n)}n \Bigr) \ll \frac{X}{T}.
\]
\end{lemma}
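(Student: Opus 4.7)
The plan is to compute $n - \Phi(n)$ by classifying $u \in [n]$ according to the value of $d = (u,n)$, and then swap the order of summation. Writing $u = du'$ with $d = (u,n) > T$ and $(u',n/d) = 1$, $u' \in [n/d]$, we obtain
\[
n - \Phi(n) = \#\{u \in [n] : (u,n) > T\} = \sum_{\substack{d \mid n \\ d > T}} \phi(n/d).
\]
Dividing by $n$ and summing, I would swap the sums over $n$ and $d$, then write $n = dm$ with $m \le X/d$, to reduce the task to estimating
\[
\sum_{n \le X} \Bigl(1 - \frac{\Phi(n)}{n}\Bigr) = \sum_{d > T} \frac{1}{d} \sum_{m \le X/d} \frac{\phi(m)}{m}.
\]

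At this point the inner sum is bounded by the elementary estimate $\sum_{m \le Y} \phi(m)/m \ll Y$ (in fact $\sim 6Y/\pi^2$, but the trivial upper bound $\phi(m)/m \le 1$ already suffices). Substituting this gives
\[
\sum_{n \le X} \Bigl(1 - \frac{\Phi(n)}{n}\Bigr) \ll \sum_{d > T} \frac{1}{d} \cdot \frac{X}{d} = X \sum_{d > T} \frac{1}{d^2} \ll \frac{X}{T},
\]
which is the desired bound.

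There is no real obstacle here: the argument is a short counting identity followed by an order-of-summation swap and the trivial bound $\phi(m) \le m$. The only thing worth flagging is the boundary case $d = n$ (i.e.\ $m = 1$), which contributes $\phi(1) = 1$ and is consistent with the convention that the unique $u \in [n]$ with $(u,n) = n$, namely $u = n$, should indeed be included in the count of those with $(u,n) > T$. This is consistent with the definition of $\Phi(n)$ as counting $u \in [n]$ with $(u,n) \le T$ (so $u = n$ is excluded when $n > T$).
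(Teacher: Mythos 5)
Your proof is correct and follows essentially the same route as the paper: both start from the identity $n - \Phi(n) = \sum_{d \mid n,\, d > T}\phi(n/d)$, swap the order of summation, and apply the trivial bound $\phi(m) \le m$ before summing $\sum_{d>T} d^{-2} \ll 1/T$. The only cosmetic difference is that the paper discards $\phi$ for the trivial bound immediately and then swaps, whereas you swap first (substituting $n = dm$) and discard $\phi$ at the end; the computations are otherwise identical.
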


\begin{proof}
We begin by noting that
\[
0 \le n - \Phi(n) = \sum_{\substack{u \le n:\\ (u,n) > T}}1  
=\sum_{\substack{d \mid n: \\ d > T}} \phi\left(\frac{n}{d}\right)\le \sum_{\substack{d \mid n: \\ d > T}} \frac n d,
\]
where $m \mapsto \phi(m)$ is the Euler totient function.
It follows that
\begin{align*}
0\le \sum_{n \le X} \Bigl(1- \frac{\Phi(n)}n \Bigr) &\le \sum_{n \le X} \sum_{\substack{d \mid n: \\ d > T}} \frac1d = \sum_{T < d \le X} \frac1d \sum_{\substack{n \le X: \\ d\mid n}}1 \\
& \le X \sum_{T < d \le X} d^{-2} \ll \frac{X}{T}.
\end{align*}
\end{proof}

\begin{lemma} \label{Psi2}
We have
\[
\sum_{n \le X} \Bigl(1 - \frac{\Phi(n)}n \Bigr)^2 \ll \frac{X \log T}{T^2}.
\]
\end{lemma}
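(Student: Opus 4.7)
The plan is to build on the divisor identity from the proof of Lemma \ref{PsiAvg}, namely
\[
n - \Phi(n) = \sum_{\substack{d \mid n \\ d > T}} \phi(n/d) \le \sum_{\substack{d \mid n \\ d > T}} \frac{n}{d}.
\]
Dividing by $n$ and squaring the resulting upper bound for $1 - \Phi(n)/n$ gives
\[
\Bigl(1 - \frac{\Phi(n)}{n}\Bigr)^2 \le \sum_{\substack{d_1, d_2 \mid n \\ d_1, d_2 > T}} \frac{1}{d_1 d_2}.
\]
Summing over $n \le X$ and swapping the order of summation, the number of $n \le X$ divisible by both $d_1$ and $d_2$ is at most $X/\lcm(d_1,d_2)$, so the problem reduces to establishing
\[
X \sum_{T < d_1, d_2} \frac{1}{d_1 d_2 \lcm(d_1,d_2)} \ll \frac{X \log T}{T^2}.
\]

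The key maneuver is to parametrise by the gcd. Writing $e = (d_1, d_2)$ and $d_i = e m_i$ with $(m_1, m_2) = 1$, one has $d_1 d_2 \lcm(d_1, d_2) = e^3 m_1^2 m_2^2$, and the constraints become $e m_i > T$. Thus the sum above is bounded by
\[
\sum_e \frac{1}{e^3} \sum_{\substack{m_1, m_2 \ge 1 \\ e m_i > T}} \frac{1}{m_1^2 m_2^2},
\]
and I would split into the ranges $e > T$ and $e \le T$. In the first range the tail conditions on $m_i$ are vacuous, and the inner sum is $O(1)$, producing a contribution $\ll \sum_{e > T} e^{-3} \ll T^{-2}$.

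In the second range, $T/e \ge 1$, and the tail estimate $\sum_{m > T/e} m^{-2} \ll e/T$ is applied in each variable independently, giving an inner sum of size $O((e/T)^2)$. The contribution is then
\[
\sum_{e \le T} \frac{1}{e^3} \cdot \frac{e^2}{T^2} = \frac{1}{T^2} \sum_{e \le T} \frac{1}{e} \ll \frac{\log T}{T^2},
\]
which matches the target. The whole argument is essentially bookkeeping once the parametrisation by $e = (d_1, d_2)$ is in place; the only genuinely non-trivial ingredient is the harmonic sum in the second range, which is precisely what produces the $\log T$ factor in the statement of the lemma, reassuringly showing that this factor is sharp for this method.
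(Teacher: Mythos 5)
Your proof is correct and follows essentially the same route as the paper: the same divisor identity, the same interchange of summation, and the same parametrisation by the gcd $e=(d_1,d_2)$ with the split into $e\le T$ and $e>T$. The only cosmetic difference is that you write the divisibility count as $X/\lcm(d_1,d_2)$ and then expand $d_1d_2\lcm(d_1,d_2)=e^3m_1^2m_2^2$, whereas the paper phrases the same quantity as $X(d,e)/(d^2e^2)$ before substituting $d=gx$, $e=gy$; the estimates are identical.
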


\begin{proof}
Using the same initial manoeuvre as above, we have
\begin{align*}
\sum_{n \le X} \Bigl(1- \frac{\Phi(n)}n \Bigr)^2 &\le \sum_{n \le X} \sum_{\substack{d,e \mid n \\ d,e > T}} \frac1{de} = \sum_{T < d,e \le X} \frac1{de} \sum_{\substack{n \le X: \\ [d,e]\mid n}}1 \\
& \le X \sum_{T < d,e \le X} \frac{(d,e)}{d^2e^2}.
\end{align*}
Putting
\[
g = (d,e), \qquad d = gx, \qquad e = gy
\]
gives
\begin{align*}
\sum_{n \le X} \Bigl(1- \frac{\Phi(n)}n \Bigr)^2 &\le X \sum_{g \le X} g^{-3} \sum_{x,y \ge T/g} (xy)^{-2} \\& \ll X \sum_{g \le T} g^{-1} T^{-2} + X\sum_{g > T} g^{-3} \ll \frac{X \log T}{T^2}.
\end{align*}
\end{proof}

With these estimates done, we move on to the main goal of this section, which is to establish the following $L^1$ estimate. We recall the use of $\widetilde{E}$ to denote the normalised additive energy $E/N^3$. 

\begin{prop} \label{close} We have
\[
\int_\bT |F(\alp) - F^*(\alp)| \d \alp \ll \frac{\sqrt{\log T}}T(\widetilde{E}\delta^{-1})^{1/2}.
\]
\end{prop}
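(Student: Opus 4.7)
The plan is to observe that $F^*$ is pointwise no larger than $F$, so $|F-F^*| = F-F^*$, and then to bound the integral by Cauchy--Schwarz using the two auxiliary lemmas that have just been proved.

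To set this up, let $\cF_n = \{\alpha \in \bT : \|n\alpha\| \le s/N\}$, and note that $\cE_n \subseteq \cF_n$ because the intervals defining $\cE_n$ are precisely those intervals defining $\cF_n$ that correspond to fractions $u/n$ with $(u,n) \le T$. Hence, using the formula for $F(\alpha)$ in terms of $r(n)$ and $\cF_n$,
\[
F(\alpha) - F^*(\alpha) = \frac{2}{N} \sum_{n \le X} r(n) \mathbf{1}_{\alpha \in \cF_n \setminus \cE_n} \ge 0.
\]
For $N > 2s$ the intervals in $\cF_n$ are disjoint modulo $1$, so $\mu(\cF_n) = 2s/N$, and combined with \eqref{intsize} this gives $\mu(\cF_n \setminus \cE_n) = (2s/N)(1 - \Phi(n)/n)$. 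Integrating yields
\[
\int_\bT |F(\alpha) - F^*(\alpha)| \d \alpha = \frac{4s}{N^2} \sum_{n \le X} r(n)\Bigl(1 - \frac{\Phi(n)}{n}\Bigr).
\]

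The key step is then Cauchy--Schwarz, splitting the sum as
\[
\sum_{n \le X} r(n)\Bigl(1 - \frac{\Phi(n)}{n}\Bigr) \le \Bigl(\sum_{n \le X} r(n)^2\Bigr)^{1/2}\Bigl(\sum_{n \le X} \Bigl(1 - \frac{\Phi(n)}{n}\Bigr)^2\Bigr)^{1/2}.
\]
Lemma \ref{Psi2} controls the second factor by $O(\sqrt{X \log T}/T)$. For the first, the rearrangement $a+d = c+b$ shows that $\sum_{n} r(n)^2$ counts quadruples $(a,b,c,d) \in A^4$ with $a-b = c-d$ and $a \ne b$, so $\sum_{n \le X} r(n)^2 \le E$.

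Assembling everything and recalling $s \asymp 1$, the integral is $\ll \sqrt{EX \log T}/(N^2 T)$, and since $\widetilde{E}\delta^{-1} = (E/N^3)(X/N) = EX/N^4$, this is exactly $\tfrac{\sqrt{\log T}}{T} (\widetilde{E}\delta^{-1})^{1/2}$, as desired. There is no real obstacle here beyond correctly identifying $\mu(\cF_n \setminus \cE_n)$ and recognising $\sum r(n)^2$ as essentially the additive energy; the proposition is effectively a clean $L^1$ estimate built on top of Lemma \ref{Psi2}. (Lemma \ref{PsiAvg} is not needed for this proposition; it will be relevant only in the random setting of Section \ref{s5}.)
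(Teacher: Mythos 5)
Your proof is correct and is essentially the paper's own argument: you derive the same identity $\int_\bT |F-F^*|\,\d\alpha = \frac{4s}{N^2}\sum_{n\le X} r(n)(1-\Phi(n)/n)$ (the paper gets there by integrating $F$ and $F^*$ separately rather than pointwise via $\cF_n\setminus\cE_n$, but this is cosmetic), then applies Cauchy--Schwarz with Lemma \ref{Psi2} and the bound $\sum r(n)^2\le E$ coming from \eqref{energy}. Your closing observation that Lemma \ref{PsiAvg} is only needed for the random case (Proposition \ref{close random}) is also accurate.
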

\begin{proof}
Observe that
\[
0 \le F^*(\alp) \le F(\alp),
\]
and that
\[
\int_\bT F(\alp) \d \alp = \frac{2}{N} \sum_{n \le X}  r(n)\frac{2s}N,
\qquad
\int_\bT F^*(\alp) \d \alp = \frac{2}{N} \sum_{n \le X} r(n) \mu(\cE_n).
\]
Therefore
\[
\int_\bT |F(\alp) - F^*(\alp)| \d \alp = \frac{2}{N} \sum_{n \le X} r(n)\Bigl( \frac{2s}N - \mu(\cE_n) \Bigr).
\]
Using \eqref{intsize} it follows that
\begin{align*}
\int_\bT \lvert F(\alp) - F^*(\alp)\rvert \d \alp 
&= \frac{2}{N} \sum_{n \le X} r(n)\frac{2s}{N} \left(1- \frac{\Phi(n)}n \right) \\
&\ll_s 
N^{-2} \sum_{n \le X} r(n) \left(1- \frac{\Phi(n)}{n} \right).
\end{align*}
Finally, Cauchy's inequality and Lemma \ref{Psi2} yield the desired inequality, since
\begin{equation} \label{energy}
E = N^2 + 2 \sum_{n \le X} r(n)^2.
\end{equation}
\end{proof}

\section{An overlap estimate} \label{s3}

In this section, preparing for an $L^2$ argument, we bound the Lebesgue measure of the overlap $\cE_n \cap \cE_m$. For $m, n \in \bN$, define
\[
A(n, m) = \sum_{\substack{u \le n, \text{   }v \le m \\ \frac un = \frac vm \\ (u,n), (v,m) \le T}} 1.
\]
Note by symmetry that $A(n,m) = A(m,n)$. The following lemma is a minor adaptation of Lemma 4.4 of \cite{Har1998}, in order to work with uniform cut-offs. 

\begin{lemma} \label{overlap}
For $n \ge m \ge 1$ we have
\[
\mu(\cE_n \cap \cE_m) \le \frac{4s^2}{N^2} + \frac{2s}N \frac{A(n,m)}n.
\]
\end{lemma}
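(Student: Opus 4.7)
The plan is to decompose the intersection as a union over pairs of constituent intervals and to separate the pairs with equal and unequal centres. Writing $I_u = ((u - s/N)/n, (u+s/N)/n) \bmod 1$ for the intervals in $\cE_n$ (length $2s/(nN)$) and $J_v$ for those in $\cE_m$ (length $2s/(mN)$), and noting that these are pairwise disjoint within each set (since the interval length is much less than the centre-spacing $1/n$ or $1/m$ for $N$ large), one obtains
\[
\mu(\cE_n \cap \cE_m) = \sum_{u, v} \mu(I_u \cap J_v),
\]
summed over admissible $(u, v) \in [n] \times [m]$ with $(u, n), (v, m) \le T$. In the \emph{aligned case} $um = vn$ (equivalently $u/n = v/m$), the two intervals share a centre, and since $n \ge m$ we have $I_u \subseteq J_v$, so $\mu(I_u \cap J_v) = 2s/(nN)$. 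Summing over the $A(n, m)$ such pairs contributes exactly $\tfrac{2s}{N} \cdot \tfrac{A(n,m)}{n}$, which is the second term of the bound.

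In the \emph{misaligned case} $um \ne vn$, the strategy is to parameterise by $k = um - vn \in \bZ \setminus \{0\}$. The centres $u/n$ and $v/m$ differ by $|k|/(nm)$, so $I_u \cap J_v$ is empty unless $|k| < K := (s/N)(m + n)$, and the overlap then equals
\[
\min\Bigl(\tfrac{2s}{nN},\; \tfrac{K - |k|}{nm}\Bigr).
\]
The first expression dominates when $|k| \le (s/N)(n - m)$, while the second decreases linearly from $2s/(nN)$ to $0$ as $|k|$ increases from $(s/N)(n-m)$ to $K$. For each relevant $k$, necessarily divisible by $d := \gcd(m, n)$, standard theory of linear Diophantine equations shows that there are at most $d$ solutions $(u, v) \in [n] \times [m]$ to $um - vn = k$. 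The pairs with $0 < |k| \le (s/N)(n - m)$ contribute at most $2(s/N)(n - m) \cdot 2s/(nN) = 4s^2(n - m)/(nN^2)$, while an arithmetic-sum estimate over the remaining range (exploiting the linear decay) yields at most $4s^2 m/(nN^2)$. Adding these gives precisely $\tfrac{4s^2}{N^2}$, matching the first term of the bound.

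The main obstacle is securing the constant $4$ rather than a larger one. Using the uniform upper bound $2s/(nN)$ on every misaligned overlap would only give $O(s^2/N^2)$ with a constant around $8$, which is too large once combined with the aligned contribution. Obtaining the sharper constant forces one to carefully sum the triangular overlap profile over the range where it is strictly less than $2s/(nN)$, so that the $(n-m)/n$ and $m/n$ factors combine to $1$. This is a routine but slightly delicate computation, following the template of Harman's \cite[Lemma 4.4]{Har1998}, adapted to the uniform interval-length $2s/N$ used here in place of the variable length $2s/n$ that appears there.
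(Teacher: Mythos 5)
Your proposal is correct and follows essentially the same route as the paper: split into aligned pairs ($u/n = v/m$), each contributing $2s/(nN)$ and counted by $A(n,m)$, and misaligned pairs, parameterised by $k = um - vn$ (the paper calls it $h$), with at most $(m,n)$ solutions per admissible $k$, bounding the resulting sum by the integral of the triangular overlap profile split at $|k| = (s/N)(n-m)$. The arithmetic is identical, down to the $(n-m)/n + m/n = 1$ cancellation that secures the constant $4$.
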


The constant 4 will be important, so we need to be quite precise here.

\begin{proof}
Considering separately the contributions from pairs of intervals for which $u/n=v/m$, it is easy to see that
\[
\mu(\cE_n \cap \cE_m) \le B_1 + B_2,
\]
where
\[
B_1 = \sum_{ \substack{u \le n, v\le m \\ (u,n), (v,m) \le T \\ u/n = v/m}} \frac {2s}{Nn} = \frac{2s}N \frac{A(n,m)}n
\]
and
\[
B_2 = \sum_{ \substack{u \le n, v\le m \\ (u,n), (v,m) \le T \\ u/n \ne v/m}} \mu \Bigl( \Bigl(\frac{u - s/N}n, \frac{u+s/N}n \Bigr) \cap
 \Bigl(\frac{v - s/N}m, \frac{v+s/N}m \Bigr) \Bigr).
\]
It therefore remains to prove that $B_2 \le 4s^2/N^2$.

If $u/n \ne v/m$ and the intervals $\Bigl(\frac{u - s/N}n, \frac{u+s/N}n \Bigr)$ and $\Bigl(\frac{v - s/N}m, \frac{v+s/N}m \Bigr)$ intersect then for some integer $h$ we have
\[
0 < |h| < (n+m)\frac sN, \qquad \frac u n - \frac v m = \frac h {mn}.
\]
The size of the overlap is bounded above by
\[
\min \Bigl(\frac{2s}{Nn}, \frac s N \Bigl( \frac1n + \frac1m \Bigr) - \frac {|h|}{mn} \Bigr).
\]
The number of positive integer solutions to $um - vn = h$ with $u \le n$ and $v \le m$ is bounded above by
\[
\begin{cases}
(m,n) &\text{if } (m,n) \mid h \\
0, &\text{else.}
\end{cases}
\]
With $y = (n+m)s/N$, we now have
\begin{align*}
B_2 &\le \sum_{\substack{0 < |h| < y \\ (m,n) \mid h}} (m,n) \min \Bigl(\frac{2s}{Nn}, \frac s N \Bigl( \frac1n + \frac1m \Bigr) - \frac {|h|}{mn} \Bigr)
\\ &= \sum_{0 < |t| < y/(m,n)} (m,n) \min \Bigl(\frac{2s}{Nn}, \frac s N \Bigl( \frac1n + \frac1m \Bigr) - \frac {(m,n)|t|}{mn} \Bigr).
\end{align*}

The summand is non-increasing in $|t|$, so
\begin{align*}
B_2 &\le 2 \int_0^{y/(m,n)} (m,n)  \min \Bigl(\frac{2s}{Nn}, \frac s N \Bigl( \frac1n + \frac1m \Bigr) - \frac {(m,n)t}{mn} \Bigr) \d t \\
& = 2 \int_0^y \min \Bigl(\frac{2s}{Nn}, \frac s N \Bigl( \frac1n + \frac1m \Bigr) - \frac {h}{mn} \Bigr) \d h.
\end{align*}
Now
\begin{align*}
B_2 & \le 2 \int_0^{(n-m)s/N} \frac {2s}{Nn} \d h + 2 \int_{(n-m)s/N}^{(n+m)s/N} \Bigl( \frac s N  \Bigl( \frac1n + \frac1m \Bigr) - \frac h {mn} \Bigr) \d h 
\\ &= \frac{4s^2}{N^2} \frac{n-m}n + \frac{4s^2}{N^2} m \Bigl(\frac1n + \frac1m\Bigr) - \frac1{mn} \frac{s^2}{N^2} ((n+m)^2 - (n-m)^2),
\end{align*}
and finally we obtain
\[
B_2 \le \frac{4s^2}{N^2} \Bigl(1 - \frac m n + \frac m n + 1 - 1 \Bigr) = \frac{4s^2}{N^2}.
\]
\end{proof}

We shall also need to bound the average overlap. 

\begin{lemma}\label{lem-averageoverlap}
We have
\[
\sum_{n \le X} \sum_{m \le n} \frac{A(m,n)}n \ll X \log T.\]
\end{lemma}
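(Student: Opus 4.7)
The plan is to parametrize solutions to $u/n = v/m$ explicitly, thereby reducing $A(m,n)$ to a sum over divisors that makes the role of the threshold $T$ transparent. First I would write $d = (m,n)$, $n = dn'$, $m = dm'$ with $(m',n') = 1$ and $m' \le n'$. The equation $um = vn$ with $u \le n$, $v \le m$ then has solutions exactly $(u,v) = (kn', km')$ for $k = 1, \dots, d$, and a direct gcd computation (factoring $n'$ out) gives
\[
(u, n) = n'(k,d), \qquad (v, m) = m'(k,d).
\]
Since $n' \ge m'$, the binding constraint is $n'(k,d) \le T$. Thus
\[
A(m,n) = \#\{k \in [d] : n'(k,d) \le T\} =: A(d, n'),
\]
and, crucially, this depends on $m'$ only through the coprimality condition. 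Note also that $A(d,n') = 0$ unless $n' \le T$.

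Substituting this parametrization into the target sum and performing the inner sum over $m' \le n'$ coprime to $n'$ (which yields $\phi(n')$, taking $\phi(1) = 1$) gives
\[
\sum_{n \le X} \sum_{m \le n} \frac{A(m,n)}{n} = \sum_{d} \frac{1}{d} \sum_{n' \le X/d} \frac{\phi(n')}{n'} A(d, n').
\]
I would then expand $A(d, n') = \sum_{k=1}^d \mathbf{1}[n'(k,d) \le T]$, swap the order of summation, and apply the standard estimate $\sum_{n' \le M} \phi(n')/n' \ll M$. Grouping $k$'s by $g := (k,d)$, this reduces the problem to bounding
\[
\sum_{d} \frac{1}{d} \sum_{\substack{g \mid d \\ g \le T}} \phi(d/g) \min\!\left( \frac{X}{d}, \frac{T}{g} \right).
\]

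The final step is to write $d = gh$ and bound
\[
\sum_{g \le T} \frac{1}{g^2} \sum_{h \le X/g} \frac{\phi(h)}{h} \min\!\left( \frac{X}{h}, T \right).
\]
Splitting the inner sum at $h = X/T$ and using $\sum_{h \le X/T} \phi(h)/h \ll X/T$ on one side and $\sum_{X/T < h \le X/g} 1/h \ll \log(T/g) + 1$ on the other, the inner sum is $\ll X \log(2T/g)$. Summing against $g^{-2}$ over $g \le T$ yields $\ll X \log T$, as required.

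The only delicate point is ensuring that the $T$-threshold is genuinely used: the trivial inequality $A(m,n) \le (m,n)$ alone gives only $X \log X$. The whole saving comes from the observation that the constraint $n'(k,d) \le T$ restricts $g = (k,d)$ to lie below $T$ and pairs the two divisibility parameters via the single quantity $\min(X/d, T/g)$; once this is in place the remaining estimates are routine manipulations with $\sum \phi(h)/h$ and $\sum 1/h$.
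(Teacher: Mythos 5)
Your proof is correct. The paper's route is shorter and slicker: it upper-bounds $\sum_{m\le n} A(m,n)$ by simply discarding the gcd constraint on $u$, then for each admissible $v$ writes $v/n = a/b$ in lowest terms and counts the $\lfloor n/b\rfloor$ pairs $(u,m)$ with $u/m = a/b$; the surviving constraint $(v,n)\le T$ becomes $n/b\le T$, so $\sum_{m\le n}A(m,n)\le n\,\#\{c\mid n: c\le T\}$ and the lemma follows from $\sum_{c\le T}X/c\ll X\log T$. You instead keep both gcd constraints and parametrize everything exactly, writing $d=(m,n)$, $n=dn'$, $m=dm'$, the solutions as $(kn',km')$, and computing $(u,n)=n'(k,d)$, $(v,m)=m'(k,d)$; you then observe that the constraint with the larger cofactor $n'$ is the binding one, decompose via $\phi$, substitute $d=gh$, and sum. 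Both proofs hinge on the same structural fact --- that $A(m,n)$ vanishes unless $n/(m,n)\le T$ --- but yours is an exact multiplicative decomposition while the paper's is a one-step overcount; the paper's is terser, while yours makes the role of $T$ and the near-sharpness of the $X\log T$ bound more transparent (and your closing remark that the trivial bound $A(m,n)\le(m,n)$ only gives $X\log X$ is a correct and useful sanity check). The only spot worth a word of care is the final double sum: for $g$ close to $T$ the range $X/T<h\le X/g$ is empty, so the inner sum is just $\ll X$, which is still $\ll X\log(2T/g)$; with that noted, $\sum_{g\le T}g^{-2}\log(2T/g)\ll\log T$ and you are done.
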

\begin{proof}
We upper bound $\sum \limits_{m\leqslant n} A(m,n) $ by neglecting one of the greatest common divisor constraints, i.e.$$\sum \limits_{m\leqslant n} A(m,n) \leqslant \sum\limits_{\substack{1\leqslant m,u,v\leqslant n\\ \frac{u}{m} = \frac{v}{n}\\ (v,n)\leqslant T}} 1.$$ Now, for fixed $v$, there are unique coprime positive integers $a$ and $b$ such that $\frac{v}{n} = \frac{a}{b}$. The denominator $b$ is a divisor of $n$, and $a\leqslant b$. Further, with such an $a$ and $b$ fixed, the number of solutions to $$\frac{a}{b}= \frac{u}{m}, \qquad 1\leqslant u,m\leqslant n$$ is exactly $\frac{n}{b}$.

We now consider how many times a particular divisor $b$ of $n$ can occur, as $v$ ranges up to $n$. Indeed, since $b= \frac{n}{(v,n)}$, we see that $b$ occurs exactly the number of times that $(v,n) = \frac{n}{b}$. If $\frac{n}{b}\leqslant T$, then this is at most $b$ times; if $\frac{n}{b}> T$, then the greatest common divisor condition $(v,n)\leqslant T$ precludes this from ever occurring. 

Therefore
\[
\sum\limits_{m \le n} A(m,n) \leqslant \sum\limits_{\substack{b\vert n\\ \frac{n}{b} \leqslant T}} b \frac{n}{b} 
 = \sum\limits_{\substack{c\vert n\\ c\leqslant T}} n.\]
It follows that
\[\sum_{n\le X}\sum_{m\le n}\frac{A(m,n)}{n}
\le \sum_{n\le X}\sum_{\substack{c\mid n\\ c\le T}} 1
\le \sum_{c\le T}\frac{X}{c}
\ll X\log T\]
as claimed.
\end{proof}

\section{The variance estimate}

Like so much work on metric properties, we aim to show a result for almost all $\alpha$ by bounding the variance ($\alpha$ considered as a uniform random variable on $\bT$). Rather than working directly with $F$, however, the objective of this section is instead to establish the following bound on the variance of $F^*$.

\begin{prop} \label{variance}
The variance of $F^*$ satisfies
\[
\mathrm{Var}(F^*) =\int_\mathbb{T} \lvert F^*(\alpha) - \mathbb{E} F^*\rvert^2\d \alpha \ll  \frac{\sqrt{\log T}}T (\widetilde{E} \delta^{-1})^{1/2} + \widetilde{E} T \log T.
\]
\end{prop}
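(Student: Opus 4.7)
The plan is to expand the variance as a double sum, separate the diagonal from the off-diagonal, and then apply Lemma \ref{overlap} together with the estimates on $\Phi$ and $A(m,n)$ already proved. Since
\[
F^*(\alpha) - \mathbb{E} F^* = \frac{2}{N}\sum_{n\le X} r(n)\bigl(\mathbf{1}_{\cE_n}(\alpha) - \mu(\cE_n)\bigr),
\]
expanding the square gives
\[
\mathrm{Var}(F^*) = \frac{4}{N^2}\sum_{m,n \le X} r(m)r(n)\bigl(\mu(\cE_m\cap\cE_n) - \mu(\cE_m)\mu(\cE_n)\bigr).
\]
The diagonal $m=n$ contributes at most $(8s/N^3)\sum_n r(n)^2 \ll \widetilde{E}$, by $\mu(\cE_n)\le 2s/N$ and \eqref{energy}, which is absorbed into the $\widetilde{E}\,T\log T$ term of the desired bound.

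For the off-diagonal, by symmetry it suffices to consider $n > m$. Lemma \ref{overlap} combined with the identity $\mu(\cE_m)\mu(\cE_n) = (4s^2/N^2)\Phi(m)\Phi(n)/(mn)$ from \eqref{intsize} gives the pointwise bound
\[
\mu(\cE_m\cap\cE_n) - \mu(\cE_m)\mu(\cE_n) \le \frac{4s^2}{N^2}\Bigl(1 - \frac{\Phi(m)\Phi(n)}{mn}\Bigr) + \frac{2s}{N}\frac{A(m,n)}{n},
\]
and I would treat the two pieces separately. For the first, the elementary inequality $1-xy \le (1-x)+(1-y)$ for $x,y\in[0,1]$ and symmetry reduce the contribution to a constant multiple of $\bigl(\sum_m r(m)(1-\Phi(m)/m)\bigr)\bigl(\sum_n r(n)\bigr)/N^4$; bounding the first factor via Cauchy--Schwarz and Lemma \ref{Psi2}, and the second by $N^2$, a routine calculation using $E = \widetilde{E}N^3$ and $X = N/\delta$ produces the contribution $\sqrt{\log T}\,T^{-1}(\widetilde{E}\delta^{-1})^{1/2}$.

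The main obstacle is the second piece. The AM--GM inequality $r(m)r(n) \le \tfrac12(r(m)^2 + r(n)^2)$ together with the symmetry $A(m,n) = A(n,m)$ reduces the problem to controlling $\sum_n r(n)^2 \sum_m A(m,n)/\max(m,n)$, so by \eqref{energy} it suffices to establish the \emph{uniform} bound
\[
\sum_{m \le X} \frac{A(m,n)}{\max(m,n)} \ll T\log T \qquad (n \le X).
\]
The contribution from $m \le n$ is handled by the intermediate divisor estimate from the proof of Lemma \ref{lem-averageoverlap}, which shows $\sum_{m\le n} A(m,n) \le n \cdot \#\{d\mid n : d \le T\}$ and hence $\sum_{m\le n}A(m,n)/n \le \#\{d\mid n: d \le T\} \le T$. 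For $m > n$, a parallel divisor-by-divisor analysis---fixing $v \le n$ with $(v,n) = d \le T$, so that each contributing pair $(u,m)$ takes the form $m = kn/d$, $u = kv/d$ with $d < k \le T$---yields $\sum_{m > n}A(n,m)/m \ll \log T \cdot \#\{d\mid n : d \le T\} \ll T\log T$. Combining these gives the uniform bound, which upon multiplication by $\sum_n r(n)^2 \ll E$ and the prefactor $(16s/N^3)$ produces the claimed $\widetilde{E}\,T\log T$ contribution and completes the proof.
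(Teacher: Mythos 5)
Your proof is correct, and while the overall architecture matches the paper's (expand the variance as a double sum, apply Lemma~\ref{overlap} and Lemma~\ref{Psi2}, and control the $A$-sum using \eqref{second gcd bound}), you diverge from it at two points, both in ways that work.

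First, you expand $\mathrm{Var}(F^*)$ directly as $\frac{4}{N^2}\sum_{m,n}r(m)r(n)\bigl(\mu(\cE_m\cap\cE_n)-\mu(\cE_m)\mu(\cE_n)\bigr)$, whereas the paper first replaces $\bE F^*$ by $2s$ (using Proposition~\ref{close} and $\bE F = 2s(1-1/N)$) at the cost of an extra $O(\rho^2)$ error, then expands $\int(F^*-2s)^2$. Your route is marginally cleaner as it eliminates the $\rho^2$ bookkeeping; the paper's intermediate step earns its keep later, since the estimate \eqref{eqn expectation estimation} on $\bE F^*$ is needed anyway for Chebyshev in Section~\ref{s6}. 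Second, and more substantively, for the $A(m,n)$-piece you use AM--GM on $r(m)r(n)$ to reduce to $\sum_n r(n)^2\sum_m A(m,n)/\max(m,n)$ and then prove the uniform bound $\sum_{m\le X}A(m,n)/\max(m,n)\ll T\log T$ by a direct divisor-by-divisor count (separating $m\le n$, which follows from the intermediate step of Lemma~\ref{lem-averageoverlap}, from $m>n$, which you handle by the same parametrisation $m=kn/d$, $u=kv/d$, $k\le T$). The paper instead bounds $1/n\le 1/\sqrt{mn}$ to pass to the symmetric kernel $(m,n)/\sqrt{mn}$, reparametrises via $g=(m,n)$, $m=gy_1$, $n=gy_2$ with $y_1,y_2\le T$, and applies Cauchy--Schwarz to $\bigl(\sum_{y\le T}r(gy)/\sqrt{y}\bigr)^2$. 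Both routes land on $\widetilde E\,T\log T$. Your version is arguably more elementary (no implicit gcd-sum machinery, just counting rationals with bounded denominator), while the paper's version connects more cleanly to the gcd-sum literature, which motivates their Remark~\ref{BigRemark} on sharper bounds via Lewko--Radziwi\l{}\l{}. A small remark: your diagonal contribution $\ll\widetilde E$ is indeed absorbed into $\widetilde E\,T\log T$ since $T\ge2$; and the inequality $1-xy\le(1-x)+(1-y)$ you invoke for $x,y\in[0,1]$ is just $1-xy=(1-x)+x(1-y)$.
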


\begin{proof}
For brevity, we introduce the temporary notation 
\[
\rho = \frac{\sqrt{\log T}}T (\widetilde{E}  \delta^{-1})^{1/2}.
\]
We will eventually choose $T$ in such a way that $N^{-1} \le \rho \le 1$. 

We begin by replacing $\bE F^*$ with a simpler expression. Indeed, by Proposition \ref{close} and the fact that $F^*(\alp) \le F(\alp)$, we have
\[
\bE F^* = \bE F + O(\rho).
\]
Since
\begin{equation*}
\bE F = \int_\bT F(\alp) \d \alp  = \frac{1}{N}\sum_{n \in \bZ\backslash\{0\}} r(n)\frac{2s}N = 2s (1-1/N),
\end{equation*}
\noindent we must have
\begin{equation} \label{eqn expectation estimation}
\bE F^* = 2s + O(\rho).
\end{equation}
A short calculation then tells us that 
\begin{equation*}
\textrm{Var}(F^*) = \int_\bT (F^*(\alp) - 2s)^2 \d \alp + O(\rho^2).
\end{equation*}

Expanding the main term gives
\begin{align*}
\frac{4}{N^2}\sum_{1\le n,m \le X} r(n) r(m) \mu(\cE_n \cap \cE_m) - \frac{8s}{N}\sum_{1\le n \le X} r(n) \mu(\cE_n) + 4s^2.
\end{align*}
By \eqref{intsize} and Lemma \ref{overlap} this is at most
\begin{align}  \notag
&\frac{16s^2} {N^4} \sum_{n,m \le X} r(n)r(m) 
+ \frac{ 16s }{N^{3}} \sum_{m \le n \le X}  r(n) r(m) \frac{A(n,m)}n 
\\ \label{AledCalc0} &  \qquad -\frac{16s^2 }{N^2} \sum_{n \le X} r(n) \frac{\Phi(n)}n + 4s^2.
\end{align}

Now, as 
\[
\sum_{1\le n \le X} r(n) = \frac{N^2}2 + O(N),
\]
we have
\[
\frac{16s^2}{N^4} \sum_{1\le n,m \le X} r(n)r(m) + 4s^2 = \frac{16s^2}{N^2} \sum_{1\le n \le X} r(n) + O(N^{-1}).
\]
Substituting this into \eqref{AledCalc0} and bounding $\frac{1}{n}$ by $\frac{1}{\sqrt{mn}}$ when $m \le n$ yields
\begin{equation} \label{AledCalc}
\textrm{Var}(F^*) \ll S_1 + S_2 + N^{-1}  + \rho^2,
\end{equation}
where
\[
S_1 =\frac{1}{N^2} \sum_{n \le X} r(n) \left(1- \frac{\Phi(n)}{n}\right)
\]
and
\[
S_2 = N^{-3} \sum_{m, n \le X} r(m) r(n) \frac{A(m,n)}{\sqrt{mn}}.
\]
Observe that the quantity $S_1$ was studied in the proof of Proposition \ref{close}, and in view of that calculation we have $S_1 \ll \rho$.

For $S_2$, a more sensitive treatment of $A(m,n)$ than Lemma \ref{lem-averageoverlap} will be required. We recall that $A(m,n)$ counts solutions $(u,v) \in [m] \times [n]$ to
\[
un = vm, \qquad (u,m), (v,n) \le T.
\]
The solutions are
\[
u = \lam \frac m{(m,n)}, \qquad v = \lam \frac n {(m,n)}
\]
for $1 \le \lam \le (m,n)$, subject to the further restriction given by $T$. As
\[
(u,m) = \frac m{(m,n)} (\lam,m,n), \qquad (v,n) = \frac n{(m,n)} (\lam,m,n),
\]
we have
\[
A(m,n) \le \sum_{\substack{\lam \le (m,n): \\ (\lam, m, n) \le \frac{ (m,n)}{\max\{m,n\}} T}}1.
\]

From this we extract two key pieces of information: that
\begin{equation}\label{first gcd bound}
A(m,n) \le (m,n),
\end{equation}
and that if $A(m,n) \ne 0$ then
\begin{equation}\label{second gcd bound}
\frac{m}{(m,n)}, \frac{n}{(m,n)} \le T.
\end{equation}
Just using \eqref{first gcd bound} along with the gcd sum bounds of \cite{BS2015}, which entirely removes the influence of $T$, we recover an analogous estimate to \cite[Lemma 3]{ALL2017} (see Remark \ref{BigRemark} below). However, if we also use \eqref{second gcd bound} and Cauchy--Schwarz, we obtain
\begin{align*}
S_2 & \le \frac{1}{N^3}\sum_{\substack{m,n\le X\\  \frac{m}{(m,n)}, \frac{n}{(m,n)} \le T}}r(m)r(n)\frac{(m,n)}{\sqrt{mn}} \le \frac{1}{N^3} \sum_{g \le X} \Bigl(\sum_{y \le T} \frac{r(gy)}{\sqrt{y}} \Bigr)^2 \\
&\le \frac1{N^3} \sum_{g \le X} \Bigl(\sum_{y \le T} \frac1y\Bigr) \cdot \sum_{y \le T} r(gy)^2 \ll \frac{\log T}{N^3} \sum_{n \in \bN} r(n)^2 \sum_{\substack{y \mid n \\ y \le T}}1 \ll \widetilde{E} T \log T.
\end{align*}
In view of \eqref{AledCalc}, this completes the proof of the proposition.
\end{proof}

\begin{remark} \label{BigRemark} There is an alternative approach to bounding $S_2$, based on the general theory of gcd sums of the form
\[
\sum_{k,\ell = 1}^M \frac{ (n_k,n_\ell)^{2\alp}}{(n_kn_\ell)^\alp}.
\]
At $\alp = 1/2$ we have already mentioned the essentially optimal bounds by Bondarenko and Seip \cite{BS2015, BS2017}, which were employed in \cite[Lemma 3]{ALL2017}. We can pass to $\alp = 1$ by Rankin's trick, using \eqref{second gcd bound}; at this exponent there is G\'al's \cite{Gal} prize-winning upper bound $O(M (\log \log M)^2)$. By \cite[Lemma 4]{ABS} and the discussion immediately following it, one can attach real weights to this at little cost, the idea being to apply this with weights $r(n)$. The sharpest version is a recent breakthrough by Lewko and Radziwi\l{}\l{} \cite[Theorem 2]{LR}, by which we obtain the $L^2$ estimate
\[
\sum_{k,\ell = 1}^M c_k c_{\ell} \frac{ (n_k,n_\ell)^2 }{n_kn_\ell} \ll  (\log \log M)^2 \sum_k c_k^2.
\]
There are at most $M=N^2$ values of $n$ for which $r(n) \ne 0$, so this approach ultimately yields
\[
\mathrm{Var}(F^*) \ll \frac{\sqrt{\log T}}T (\widetilde{E} \delta^{-1})^{1/2} + T \widetilde{E} (\log \log N)^2.
\]
As it happens, for our eventual choice of $T$ there is 
little quantitative difference between the two approaches, and certainly either is sufficient for Theorem \ref{MainThm}.
\end{remark}

\section{Improved estimates for the random case} \label{s5}

In this section we consider the setting of Theorem \ref{RandomThm}, and revisit the variance estimate from the preceding section. By a standard application of large deviation inequalities, we can assume that the representation function $r(n)$ is essentially constant. This leads to an improvement. 

We begin with some easy bounds on $N$ and $r(n)$.

\begin{lemma}
\label{Chernoff lemma}
Let $C>2$, and let 
\[
\psi(x) =
\begin{cases}
1, &\text{if } x \le 20,\\
(\log x)^{-1} (\log \log x)^{-C}, &\text{if } x > 20.
\end{cases}
\]
Let $\cA$ be as defined in the statement of Theorem \ref{RandomThm}, i.e. the random set of natural numbers defined by choosing $x\in \cA$ independently at random with probability $\psi(x)$. Let $\varepsilon>0$, and fix an integer $X$ large enough in terms of $\varepsilon$. Consider the following properties: 
\begin{enumerate}
\item $N$ satisfies 
\[
1-\varepsilon \leqslant \frac N{X(\log X)^{-1}(\log\log X)^{-C}} \leqslant 1+\varepsilon.
\]
\item For all positive integers $n\leqslant X$, we have
\[
r(n) \leqslant (1+\varepsilon)X(\log X)^{-2}(\log\log X)^{-2C}.
\]
\end{enumerate}
Then there is a constant $c_\varepsilon>0$ such that property (1) holds with probability at least $1-O(\operatorname{exp}(-c_\varepsilon X(\log X)^{-1}(\log\log X)^{-C})$ and property (2) holds with probability at least $1-O(X\operatorname{exp}(-c_\varepsilon X(\log X)^{-5}))$.
\end{lemma}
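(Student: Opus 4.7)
Both parts of the lemma follow from standard multiplicative Chernoff bounds for sums of independent Bernoulli random variables, with a small matching-decomposition trick needed to handle the mild dependence that arises in $r(n)$.

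For part (1), write $N = \sum_{x=1}^X Y_x$ where $Y_x := \mathbf{1}[x \in \cA]$ are independent Bernoulli random variables of mean $\psi(x)$. Since $\psi$ is slowly varying, a routine partial summation gives
\[
\bE N \;=\; \sum_{x \le X} \psi(x) \;=\; (1+o(1))\, X \psi(X) \;=\; (1+o(1))\,\frac{X}{(\log X)(\log \log X)^C}.
\]
The standard multiplicative Chernoff bound then yields $\Pr(|N - \bE N| > \tfrac{\varepsilon}{2} \bE N) \le 2\exp(-c_\varepsilon \bE N)$, and for $X$ large enough in terms of $\varepsilon$ this implies property~(1) with the required probability.

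For part (2), the obstacle is that $r(n) = \sum_{b=1}^{X-n} Y_b Y_{b+n}$ is not a sum of independent random variables, since the variables $Y_b$ are shared between consecutive summands. To decouple, consider the graph $G_n$ on vertex set $\{1,\ldots,X\}$ with edges $\{b,b+n\}$ for $1 \le b \le X-n$. This graph is a disjoint union of paths, one per residue class modulo $n$, and its edge set therefore admits a proper $2$-coloring into matchings $M_1, M_2$. Set
\[
R_i(n) \;:=\; \sum_{\{b,b'\} \in M_i} Y_b\, Y_{b'} \qquad (i = 1, 2),
\]
so that $r(n) = R_1(n) + R_2(n)$ and each $R_i(n)$ is a genuine sum of independent Bernoulli random variables (no two edges of a matching share a vertex). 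Since $\psi$ is non-increasing,
\[
\bE r(n) \;=\; \sum_{b=1}^{X-n} \psi(b)\psi(b+n) \;\le\; \sum_{b=1}^X \psi(b)^2 \;=\; (1+o(1))\, X \psi(X)^2,
\]
the final asymptotic following from partial summation as in part~(1). In particular $\bE R_i(n) \le (1+o(1))\, X\psi(X)^2 / 2$, and Chernoff's inequality (in its small- or large-deviation regime, depending on how $\bE R_i(n)$ compares with $X\psi(X)^2$) gives
\[
\Pr\!\Bigl(R_i(n) > \tfrac{1+\varepsilon}{2} X\psi(X)^2\Bigr) \;\le\; \exp\!\bigl(-c_\varepsilon X\psi(X)^2\bigr) \;\le\; \exp\!\bigl(-c_\varepsilon X (\log X)^{-5}\bigr),
\]
where the last step uses that $C$ is fixed, so $(\log \log X)^{2C} = o((\log X)^3)$. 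Summing over $i = 1, 2$ and union-bounding over $n \le X$ delivers property~(2).

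The only genuine obstacle is the dependence in $r(n)$, which is dispatched cleanly by the matching decomposition; once independence is restored, the exponential concentration statements are routine consequences of Chernoff's inequality, and no quantitatively sharper tool (such as Kim--Vu or a martingale method) is required to match the stated bounds.
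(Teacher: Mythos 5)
The overall strategy here is the same as the paper's: to bound $r(n)$, decompose the (dependent) family of indicators $\{Y_b Y_{b+n}\}_b$ into two sub-families over which the random variables are genuinely independent, apply a Chernoff bound to each, and union-bound over $n$. Your matching decomposition is precisely the paper's splitting of $[X-n]$ into $\cS_0,\cS_1$ according to the parity of $\lfloor x/n \rfloor$ (for $n\le X/3$), dressed in cleaner graph-theoretic language; the two approaches are not genuinely different at this level.

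There is, however, a real gap at the step where you assert that $\bE R_i(n) \le (1+o(1))\,X\psi(X)^2/2$ and then apply Chernoff to bound $\Pr\bigl(R_i(n) > \tfrac{1+\varepsilon}{2}X\psi(X)^2\bigr)$. The bound $\bE R_1(n) + \bE R_2(n) = \bE r(n) \le (1+o(1))X\psi(X)^2$ does \emph{not} by itself imply that each summand is at most half the total: a $2$-edge-coloring of a union of paths need not be balanced, and indeed when $n$ is around $X/2$ the graph $G_n$ is a near-perfect matching, so one colour class carries essentially all the weight. If $\bE R_1(n)$ could approach $X\psi(X)^2$ rather than half of it, your threshold $\tfrac{1+\varepsilon}{2}X\psi(X)^2$ would sit \emph{below} the mean of $R_1(n)$, and Chernoff would give no upper bound at all on $\Pr\bigl(R_1(n) > \tfrac{1+\varepsilon}{2}X\psi(X)^2\bigr)$. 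It happens that in this problem the even colour class occupies at most $X/2$ of the positions $b\le X-n$ (so the assertion is plausibly true after accounting for the slow variation of $\psi$), but establishing this requires a separate argument that you do not supply; the phrase ``in particular'' suggests a logical implication that is not there.

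The standard way to avoid this issue is exactly what the paper does: instead of comparing $R_i(n)$ to a fixed threshold $\tfrac{1+\varepsilon}{2}X\psi(X)^2$, compare it to $(1+\varepsilon)\bE R_i(n)$, i.e.\ bound
$\Pr\bigl(R_0(n) > (1+\varepsilon)\bE R_0(n)\bigr) + \Pr\bigl(R_1(n) > (1+\varepsilon)\bE R_1(n)\bigr)$, which controls $\Pr\bigl(r(n) > (1+\varepsilon)\bE r(n)\bigr)$ by union bound regardless of how the expectation is distributed between the two pieces. The price is that one then needs a \emph{lower} bound on $\bE R_i(n)$ (to make the Chernoff exponent large enough), which in turn requires disposing of the range where $X-n$ is small by the trivial bound $r(n)\le X-n$, and using $|\cS_j|\ge (X-n)/4$ in the remaining range. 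Alternatively one can use a one-sided Bernstein-type estimate for $\Pr\bigl(R_i(n) > \bE R_i(n) + \tfrac{\varepsilon}{2}X\psi(X)^2\bigr)$, which works uniformly; but some version of ``compare each piece to its own mean'' is needed to make the argument watertight.
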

This is a quantitative version of a lemma which appears in Bourgain's appendix to \cite{ALL2017}, and is no doubt obvious to experts. Yet to keep the paper as self-contained as possible, we feel it is appropriate to provide the full details, particularly for part (2). 
\begin{proof}
For each $x\in \mathbb{N}$, let $\xi_x$ denote the Bernoulli random variable such that $$\mathbb{P}(\xi_x = 1) = \psi(x).$$ Then $ N = \sum\limits_{x\leqslant X}\xi_x$ is a random variable with expectation $\sum\limits_{x\leqslant X}\psi(x)$, which is asymptotic to $X(\log X)^{-1}(\log\log X)^{-C}$. Since the $\xi_x$ are independent by assumption, one may settle part (1) immediately by applying the Chernoff bound, for instance by applying Corollary A.1.14 of \cite{AS2008}. \\

For part (2), we first consider each $n\leqslant X-1$ separately. Indeed, for $x$ in the range $1\leqslant x\leqslant X-n$, let $\omega_{x,n}$ denote the random variable $\omega_{x,n} = \xi_x\xi_{n+x}$. We have $$r(n) =\sum\limits_{x\leqslant X-n } \omega_{x,n},$$ which is a random variable with expectation $\sum\limits_{x\leqslant X-n}\psi(x)\psi(n+x)$. Suppose first that $n\geqslant X - (1+\varepsilon)X(\log X)^{-2}(\log \log X)^{-2C}$. Then by the trivial triangle inequality bound we have $r(n)\leqslant (1+\varepsilon)X(\log X)^{-2}(\log \log X)^{-2C}$.

It remains to consider the case $n\leqslant X - (1+\varepsilon)X(\log X)^{-2}(\log \log X)^{-2C}$. We wish to apply concentration of measure results for sums of independent random variables, and the family of random variables $\{\omega_{x,n}:x\leqslant X-n\}$ is very close to being independent. Methods of splitting this family into groups of genuinely independent random variables are alluded to in the discussion in the appendix of \cite{ALL2017}. Here we describe an extremely coarse decomposition which nonetheless is strong enough for our purposes. \\

Split $[X-n]$ into two sets, $\cS_{0}$ and $\cS_{1}$, constructed as follows. If $n \le X/3$, let $x \in \cS_j$ if $ \lfloor x/n \rfloor \equiv j \mmod 2$. If $n > X/3$, instead let 
\[
\cS_0 = \Bigl\{ x \in \bN: x \le \frac{X-n}2 \Bigr\}, \qquad \cS_1 = \Bigl\{ x \in \bN: \frac{X-n}2 < x \le X-n \Bigr\}.
\]
For each $j \in \{0,1\}$ the family $\{\omega_{x,n}: x\in \cS_j\}$ is independent, as no two indices differ by $n$. Applying the union bound and then Corollary A.1.14 of \cite{AS2008} once more, we have 
\begin{align}
\notag
&\mathbb{P} \Bigl(r(n)\geqslant (1+\varepsilon)\sum\limits_{x\leqslant X-n}\psi(x)\psi(n+x) \Bigr)\nonumber\\
\notag &\ll \bP \Bigl(\sum\limits_{x\in \cS_0}\omega_{x,n}\geqslant (1+\varepsilon)\sum\limits_{x\in \cS_0} \psi(x)\psi(n+x) \Bigr)
\\ \label{splitbound}&\qquad + \bP \Bigl(\sum\limits_{x\in \cS_1}\omega_{x,n}\geqslant (1+\varepsilon)\sum\limits_{x\in \cS_1} \psi(x)\psi(n+x) \Bigr)\nonumber\\
&\ll\operatorname{exp}\Bigl(-c_\varepsilon\sum\limits_{x\in \cS_0}\psi(x)\psi(n+x) \Bigr)+\operatorname{exp} \Bigl(-c_\varepsilon \sum\limits_{x \in \cS_1}\psi(x)\psi(n+x) \Bigr).
\end{align}

It remains to estimate these final quantities. Note that by construction we have $\min(\vert \cS_0\vert,\vert \cS_1\vert)\geqslant (X-n)/4$. So, since $\psi(x)$ is weakly decreasing, we deduce from \eqref{splitbound} that
\begin{align*}
\mathbb{P} \Bigl(r(n)\geqslant (1+\varepsilon)\sum\limits_{x\leqslant X-n}\psi(x)\psi(n+x) \Bigr)
&\ll \operatorname{exp} \Bigl(-c_\varepsilon\sum\limits_{X-\frac{X-n}{4}< x\leqslant X}\psi(x)^2 \Bigr)
\\& \ll \operatorname{\exp} (-c_\varepsilon X(\log X)^{-5} )
\end{align*}
by a simple calculation, reducing the quantity $c_\varepsilon$ as necessary. 

By a similar monotonicity principle, $\bP(r(n)\geqslant (1+\varepsilon)\sum\limits_{x\leqslant X-n} \psi(x)\psi(n+x))$ is at least $\bP(r(n)\geqslant (1+\varepsilon)\sum\limits_{x\leqslant X}\psi(x)^2).$ Since 
\[
\sum\limits_{x\leqslant X}\psi(x)^2\sim X(\log X)^{-2}(\log \log X)^{-2C},
\]
and $X$ large enough in terms of $\varepsilon$, we conclude that 
\[
r(n)\leqslant (1+\varepsilon)X(\log X)^{-2}(\log \log X)^{-2C}
\]
with probability greater than $1-O(\operatorname{exp}(-c_\varepsilon X(\log X)^{-5}))$.

All these calculations were done for a single $n$. But applying a crude union bound over all $n \leqslant X-1$, and noting that $r(X) = 0$, part (2) is proved.
\end{proof}

\begin{cor}
\label{flatrep}
With probability $1$ there exists some $X_0$ such that properties (1) and (2) hold for all $X\geqslant X_0$. 
\end{cor}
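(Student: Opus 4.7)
The plan is to apply the first Borel--Cantelli lemma directly to the quantitative bounds from Lemma \ref{Chernoff lemma}. Fix $\varepsilon > 0$, and for each integer $X$ sufficiently large, let $E_X$ denote the event that property (1) fails or property (2) fails at scale $X$. By a union bound together with the two estimates from Lemma \ref{Chernoff lemma},
\[
\mathbb{P}(E_X) \ll \exp\bigl(-c_\varepsilon X(\log X)^{-1}(\log\log X)^{-C}\bigr) + X\exp\bigl(-c_\varepsilon X(\log X)^{-5}\bigr).
\]
Both terms decay faster than any negative power of $X$, so $\sum_{X} \mathbb{P}(E_X) < \infty$.

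By the first Borel--Cantelli lemma, with probability $1$ only finitely many of the events $E_X$ occur. On this probability-one event, there exists some (random) $X_0$ such that for every integer $X \ge X_0$, both property (1) and property (2) hold. This gives the claim for the chosen $\varepsilon$.

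If one wishes the conclusion to hold uniformly in $\varepsilon$, it suffices to apply the argument above along the countable sequence $\varepsilon = 1/k$ for $k \in \mathbb{N}$ and take the intersection of the resulting probability-one events; the exceptional set remains a null set. The main (and essentially only) obstacle is the verification that the failure probabilities are summable, which is immediate from the super-polynomial decay supplied by Lemma \ref{Chernoff lemma}; no further combinatorial or analytic input is required.
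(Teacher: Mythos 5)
Your argument is correct and is essentially the same as the paper's: bound $\mathbb{P}(E_X)$ using the two probability estimates from Lemma \ref{Chernoff lemma}, observe that both tail bounds are summable in $X$ (super-polynomial decay), and invoke the first Borel--Cantelli lemma. The extra remark about taking a countable intersection over $\varepsilon = 1/k$ is a harmless addition beyond what the corollary requires.
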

\begin{proof}
The sums 
\[
\sum\limits_{X\geqslant 1}\operatorname{exp}(-c_\varepsilon X(\log X)^{-1}(\log \log X)^{-C})
\]
and
\[
\sum\limits_{X\geqslant 1}X\operatorname{exp}(-c_\varepsilon X(\log X)^{-5})
\]
are both convergent. So by the first Borel--Cantelli lemma, the corollary follows.
\end{proof}

For the rest of this paper, whenever we consider the random case we restrict to the probability $1$ event from Corollary \ref{flatrep}. \\

To continue this section, we use the above work to get an improved version of Proposition \ref{variance}, in the random case. First we improve on Proposition \ref{close}.

\begin{prop}
\label{close random} In the setting of Theorem \ref{RandomThm}, we have
\[\int_{\mathbb{T}}\vert F(\alpha) - F^*(\alpha)\vert \, \d\alpha \ll T^{-1}.\]
\end{prop}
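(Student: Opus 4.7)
The plan is to revisit the key estimate from the proof of Proposition \ref{close}, where we showed
\[
\int_\bT |F(\alp) - F^*(\alp)| \d \alp \ll N^{-2} \sum_{n \le X} r(n) \Bigl(1- \frac{\Phi(n)}{n} \Bigr).
\]
In the general case, Cauchy--Schwarz was applied to decouple $r(n)$ from $1-\Phi(n)/n$, invoking the second moment bound from Lemma \ref{Psi2} and paying a factor $\sqrt{\log T}$. In the random setting, however, Corollary \ref{flatrep} guarantees that $r(n)$ is essentially flat: one has the uniform upper bound
\[
r(n) \ll X (\log X)^{-2} (\log \log X)^{-2C}
\]
for every $n \le X$. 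Since this matches the expected size, the Cauchy--Schwarz step becomes unnecessary, and we can instead use the cheaper first-moment estimate from Lemma \ref{PsiAvg}.

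Concretely, I would pull this uniform bound on $r(n)$ out of the sum, leaving
\[
N^{-2} \sum_{n \le X} r(n) \Bigl(1- \frac{\Phi(n)}{n} \Bigr) \ll \frac{X (\log X)^{-2} (\log \log X)^{-2C}}{N^2} \sum_{n \le X} \Bigl(1- \frac{\Phi(n)}n \Bigr).
\]
Apply Lemma \ref{PsiAvg} to bound the remaining sum by $\ll X/T$. Finally, use property (1) of Corollary \ref{flatrep}, namely $N \gg X(\log X)^{-1}(\log\log X)^{-C}$, so that
\[
N^2 \gg X^2 (\log X)^{-2}(\log\log X)^{-2C}.
\]
Substituting, the factors of $X^2 (\log X)^{-2}(\log\log X)^{-2C}$ cancel and we are left with $\ll T^{-1}$, as desired.

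There is no genuine obstacle here: the saving comes entirely from replacing Cauchy--Schwarz plus the $L^2$ estimate (Lemma \ref{Psi2}) by the direct $L^\infty \cdot L^1$ pairing (uniform $r(n)$ bound plus Lemma \ref{PsiAvg}), an exchange that removes the $\sqrt{\log T}$ loss as well as the $(\widetilde E \delta^{-1})^{1/2}$ factor. The only subtle point is ensuring we are working on the probability $1$ event on which both parts of Corollary \ref{flatrep} apply for all sufficiently large $X$; this has already been set up in the discussion immediately following that corollary.
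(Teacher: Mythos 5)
Your proof is correct and is essentially identical to the paper's argument: both pull out the uniform bound $r(n) \ll N^2/X$ supplied by Corollary \ref{flatrep} (combining parts (1) and (2)), then apply the first-moment estimate Lemma \ref{PsiAvg} to the remaining sum $\sum_{n\le X}(1-\Phi(n)/n)$. The paper states this more tersely as the single bound $\ll X^{-1}\sum_{n\le X}(1-\Phi(n)/n)$, but the content and the route are the same.
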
 
\begin{proof}
Following Proposition \ref{close}, we establish that \[\int_{\mathbb{T}}\vert F(\alpha) - F^*(\alpha)\vert \, \d\alpha = 2\sum\limits_{n\leqslant X} \frac{r(n)}{N} \frac{2s}{N}\left(1-\frac{\Phi(n)}{n}\right)\ll X^{-1} \sum\limits_{n\leqslant X}\left(1-\frac{\Phi(n)}{n}\right).\] The final inequality follows from Corollary \ref{flatrep}. Then we apply Lemma \ref{PsiAvg}, and the proposition follows.
\end{proof}
\noindent We now go on to improve Proposition \ref{variance}.
\begin{prop}
\label{variance random}
Let $\cA$ be as in Theorem \ref{RandomThm}. Then the variance of $F^*$ satisfies
\[
\mathrm{Var}(F^*)\ll T^{-1} +(\log X)^{-1}(\log \log X)^{-C}\log T.
\]
\end{prop}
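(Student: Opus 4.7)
The plan is to follow the proof of Proposition \ref{variance} almost verbatim, upgrading each estimate via two improvements available in the random setting. The first is Proposition \ref{close random}, which strengthens the $L^1$ gap $\int|F-F^*|$ to $O(T^{-1})$. The second is the uniform bound $r(n) \ll \delta^2 X$ coming from Corollary \ref{flatrep}, where $\delta \asymp (\log X)^{-1}(\log \log X)^{-C}$ (this follows since the bound $r(n) \ll X(\log X)^{-2}(\log\log X)^{-2C}$ in Corollary \ref{flatrep}(2) is precisely of order $\delta^2 X$). Together, these let us dispense with the $L^2$ Lemma \ref{Psi2} and bypass the GCD-sum machinery used to bound $S_2$ in the general case.

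Running the proof of Proposition \ref{variance} up to the display $\mathrm{Var}(F^*) \ll S_1 + S_2 + N^{-1} + (\bE F^* - 2s)^2$, we improve each piece in turn. Proposition \ref{close random} immediately yields $(\bE F^* - 2s)^2 \ll T^{-2}$. For
$$S_1 = N^{-2}\sum_{n \le X} r(n)\bigl(1 - \Phi(n)/n\bigr),$$
we factor out the uniform bound $r(n) \ll \delta^2 X$ and apply Lemma \ref{PsiAvg} (rather than Cauchy--Schwarz combined with Lemma \ref{Psi2}), obtaining
$$S_1 \ll \frac{\delta^2 X}{N^2}\cdot \frac{X}{T} \ll \frac{1}{T}.$$

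The crux is the new treatment of
$$S_2 \le N^{-3}\sum_{g \le X}\Bigl(\sum_{y \le T} r(gy)/\sqrt{y}\Bigr)^2,$$
which in Proposition \ref{variance} was estimated via Cauchy--Schwarz at the cost of $\widetilde E \, T\log T$. Here I plan to bypass the Cauchy--Schwarz step entirely: applying the uniform bound $r(gy) \ll \delta^2 X$, and exploiting that $r(gy)=0$ when $gy > X$, we get
$$\sum_{y \le T} r(gy)/\sqrt{y} \ll \delta^2 X \sum_{y \le \min(T, X/g)} y^{-1/2} \ll \delta^2 X \cdot \min(T, X/g)^{1/2}.$$
Squaring and summing in $g$, the elementary estimate $\sum_{g \le X}\min(T, X/g) \ll X\log T$ (obtained by splitting at $g = X/T$) gives
$$S_2 \ll N^{-3}\cdot \delta^4 X^2 \cdot X \log T \ll \delta \log T \asymp \frac{\log T}{(\log X)(\log\log X)^C}.$$

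Adding the contributions $T^{-1}$ from $S_1$ and $(\bE F^*-2s)^2$, and $\delta \log T$ from $S_2$, absorbing the negligible $N^{-1}$, yields the claimed bound. I expect the only real obstacle to be organisational, namely keeping the relationships $N = \delta X$ and $R \asymp \delta^2 X$ consistent throughout, and confirming that Corollary \ref{flatrep} applies uniformly in $n$ for the entire range being summed; no genuinely new combinatorial or analytic ideas appear to be required beyond the replacement of the $L^2$ with uniform $L^\infty$ control on $r(n)$.
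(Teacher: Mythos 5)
Your proof is correct, and your central observation matches the paper's: the improvement in the random case comes from replacing the $L^2$/Cauchy--Schwarz control on $r(n)$ (which only sees $\widetilde{E}$) with the uniform $L^\infty$ bound $r(n) \ll \delta^2 X$ supplied by Corollary~\ref{flatrep}, together with the sharper $L^1$ estimate of Proposition~\ref{close random}. Your treatments of $S_1$ and of the $(\bE F^* - 2s)^2$ piece coincide with the paper's.

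Where you diverge is in the main term. The paper never passes to $S_2$: it keeps the factor $1/n$ (defining instead $S_3 = N^{-3}\sum_{m\le n} r(n)r(m)A(m,n)/n$), factors out $r(m)r(n) \ll \delta^4 X^2$, and then simply cites Lemma~\ref{lem-averageoverlap}, which already asserts $\sum_{m\le n\le X} A(m,n)/n \ll X\log T$. You instead retain the proof of Proposition~\ref{variance} through the weakening $1/n \le 1/\sqrt{mn}$ and the gcd-parametrized bound $S_2 \le N^{-3}\sum_g(\sum_{y\le T} r(gy)/\sqrt{y})^2$, and then recover the lost factor by observing $r(gy)=0$ for $gy>X$ and computing $\sum_{g\le X}\min(T,X/g) \ll X\log T$ directly. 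In effect you re-prove a variant of Lemma~\ref{lem-averageoverlap} on the fly; both routes yield $S_2, S_3 \ll \delta\log T$, and the final bounds are identical. The paper's version is marginally cleaner in that it avoids the lossy step $1/n \le 1/\sqrt{mn}$ only to have to undo its effect, but nothing substantive is gained or lost either way.
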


\begin{proof}
We will eventually choose $T$ in such a way that $N^{-1}\leqslant T^{-1}\leqslant 1$. Following the proof of Proposition \ref{variance}, we thus derive 
\[ 
\textrm{Var}(F^*) \ll S_1 + S_3 + N^{-1} + T^{-2},
\] 
where 
\[
S_3 = N^{-3}\sum\limits_{m\leqslant n\leqslant X}r(n)r(m) \frac{A(m,n)}{n}.
\]

Observe that $S_1$ is the same expression considered in Proposition \ref{close random}, and so $S_1\ll T^{-1}$. It remains to bound $S_3$, for which we already have all the necessary lemmas in place. Indeed, by Corollary \ref{flatrep}, and by Lemma \ref{lem-averageoverlap}, we have \begin{align*}
S_3 &\ll X^{-1}(\log X)^{-1}(\log \log X)^{-C}\sum\limits_{m\leqslant n\leqslant X}\frac{A(m,n)}{n}\\& \ll (\log X)^{-1}(\log \log X)^{-C}\log T.
\end{align*}
Combining everything together yields the proposition.
\end{proof}

\section{Sandwiching, concentration, and Borel--Cantelli} \label{s6}

Here we conclude the proofs of Theorems \ref{MainThm} and \ref{RandomThm}. We begin by proving the following assertion, which we will see implies that $\cA$ is metric Poissonian.

\begin{prop} \label{suffices}
Let $\eps, s > 0$. Let $\mathcal{A}$ satisfy either the conditions of Theorem \ref{MainThm} or \ref{RandomThm}. Then there is a full measure set $\Omega_{\eps, s}$ such that if $\alp \in \Omega_{\eps,s}$ then there exists $X_0 = X_0(\alp,\eps,s)$ such that if $X \ge X_0$ then $|F(\alp) - 2s| \le \eps$.
\end{prop}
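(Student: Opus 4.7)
The plan is to combine the $L^1$ and $L^2$ estimates of Propositions~\ref{close}--\ref{variance} (resp.\ Propositions~\ref{close random}--\ref{variance random}) with a Chebyshev--Markov--Borel--Cantelli argument along a subsequence, finished off with a sandwiching step. First I would calibrate the threshold $T = T(N)$ appearing in the definition of $F^*$. In the setting of Theorem~\ref{MainThm}, the hypotheses combine with Proposition~\ref{variance} to suggest $T \asymp (\log N)^{1 + 3\xi/4}$, which balances the two error terms and yields $\mathrm{Var}(F^*)$ and $\int_\bT |F - F^*|\d\alp$ both bounded by $O((\log N)^{-1 - \xi/4})$ up to iterated logarithms. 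In the random setting, Propositions~\ref{close random} and~\ref{variance random} give analogous polylogarithmic decay for a comparable choice of $T$.

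Next I would fix an integer $k \ge 1$ and pass to the subsequence $X_j = X_j^{(k)}$ obtained by letting $X_j$ be the location of the $\lceil(1+1/k)^j\rceil$-th element of $\cA$. The density hypothesis in Theorem~\ref{MainThm} forces $\log X_j \asymp j/k$, which is just fast enough to make the above decay rates summable in $j$ (with constants depending on $k$). Fix a countable dense set $S \subset (0, \infty)$. Applying Chebyshev to $F^*(\,\cdot\,, s', X_j) - 2s'$, Markov to $|F - F^*|$, and the first Borel--Cantelli lemma (first for each $s' \in S$, then taking the countable union of null sets), one obtains a full-measure set $\Omega_k \subset \bT$ on which $F(\alp, s', X_j) \to 2s'$ simultaneously for every $s' \in S$.

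The third step is the sandwich. When $X \in [X_j, X_{j+1}]$, the containment $A(X_j) \subseteq A(X) \subseteq A(X_{j+1})$ together with the fact that the window width $s/N$ is decreasing in $N$ yields
\[
\frac{N_j}{N(X)} F\!\left(\alp, \frac{sN_j}{N_{j+1}}, X_j\right) \le F(\alp, s, X) \le \frac{N_{j+1}}{N(X)} F\!\left(\alp, \frac{sN_{j+1}}{N_j}, X_{j+1}\right).
\]
By construction $N_{j+1}/N_j = 1 + O(1/k)$; bracketing the dilated $s$-arguments between neighbouring elements of $S$ (by monotonicity of $F$ in $s$) and invoking the preceding step, one obtains $\limsup_{X \to \infty} |F(\alp, s, X) - 2s| \ll s/k$ for every $\alp \in \Omega_k$. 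Taking $k$ large enough that the implicit constant times $s/k$ falls below $\eps$ and setting $\Omega_{\eps, s} = \Omega_k$ completes the argument.

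The main obstacle I anticipate is reconciling the Borel--Cantelli summability (which favours a rapidly growing subsequence) with the sandwich requirement $N_{j+1}/N_j \to 1$ (which favours a slowly growing one); this is exactly what the density hypothesis $\delta \gg (\log N)^{-2 - 2\xi}$ in Theorem~\ref{MainThm} is engineered to accommodate, and the tension largely disappears in the random setting since $N \asymp X/((\log X)(\log\log X)^C)$ with probability one.
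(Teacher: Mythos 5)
Your plan is correct and, in all essentials, matches the paper's: calibrate $T$ to balance the two terms in Proposition~\ref{variance} (the paper chooses $T = (\widetilde{E}\delta)^{-1/4}+1$, which is $\asymp(\log N)^{1+3\xi/4}$ at the boundary of the hypotheses, just as you predict), pass to a subsequence along which the $L^1$ and $L^2$ error measures are summable, apply Markov/Chebyshev and Borel--Cantelli, and finish with the sandwiching inequalities. The one technical divergence is your subsequence: you take $N_j \asymp (1+1/k)^j$ and tolerate an $O(s/k)$ loss from the sandwich, choosing $k$ large at the very end, whereas in the deterministic case the paper takes $N_j = \lfloor 2^{j^{1-\eta}}\rfloor$ with $\eta$ small in terms of $\xi$, so that $N_{j+1}/N_j \to 1$ and the sandwich delivers $\eps/4$ directly without a final pass to the limit in a ratio parameter; both routes work, and in fact in the random case the paper does exactly what you do (geometric $X_j = \lfloor 2^{j\eta}\rfloor$ with $\eta$ small depending on $\eps$).
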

First, let us describe the argument in the deterministic setting of Theorem \ref{MainThm}.
\begin{proof}
Define the sequence $N_j = \lfloor 2^{j^{1-\eta}} \rfloor$, where $\eta > 0$ is small in terms of $\xi$. Note that $N_{j+1}/N_j \to 1$. For $j \in \bN$, define $X_j \in \bN$ to be minimal such that $N_j = |\cA \cap [X_j]|$. Let $X$ be large in terms of $\eps$ and $s$, and suppose $X_j \le X < X_{j+1}$. We begin with the inequalities noted in \cite{ALL2017}, namely
\[
N_j F(\alp, s \frac{N_j}{N_{j+1}}, X_j) \le NF(\alp, s, X) \le N_{j+1} F(\alp, s \frac{N_{j+1}}{N_j}, X_{j+1}),
\]
 which follow immediately from the definitions. Note that
\[
 s \frac{N_j}{N_{j+1}}, s \frac{N_{j+1}}{N_j} \asymp 1, \qquad  s \frac{N_j}{N_{j+1}}, s \frac{N_{j+1}}{N_j} \to s.
\]

Motivated by Proposition \ref{variance}, we choose $T=(\widetilde{E}  \delta)^{-1/4}+1\geq 2$. By Proposition \ref{close}, we thus have
\[
\int_{\mathbb{T}} \lvert F(\alpha) - F^*(\alpha)\rvert \d \alpha \ll (\log T)^{1/2}\widetilde{E} ^{3/4}\delta^{-1/4}.
\]
At this stage we invoke our hypotheses \eqref{conditions}: with $y := \widetilde{E}^{-1} \gg (\log N)^{2+\xi}$, we have
\[
(\log T)^{1/2} \widetilde{E}^{3/4}
\ll \frac{ \sqrt {\log y} + \sqrt{\log \log N}}{y^{3/4}}.
\]
The right hand side is decreasing in $y$ for $y \ge 100$, so by our initial assumptions \eqref{conditions} we obtain
\begin{equation} \label{eqn L1 est explicit}
\int_{\mathbb{T}} \lvert F(\alpha) - F^*(\alpha)\rvert \d \alpha \ll \frac1{(\log N)^{1+\xi/5}}.
\end{equation}

The same calculation also shows that $N^{-1}\leqslant \rho \leqslant 1$, which was assumed in the proof of Proposition \ref{variance}. Applying Proposition \ref{variance}, therefore, we have
\begin{equation}
\label{eqn L2 est explicit}
\textrm{Var}(F^*)\ll \widetilde{E}^{3/4}\delta^{-1/4} \log T \ll \frac{1}{(\log N)^{1+\xi/5}}.
\end{equation}
The $L^1$ estimate (\ref{eqn L1 est explicit}) implies that if $s \asymp 1$ and $X$ is large then
\[
|F(\alp, s, X) - F^*(\alp, s, X)| \le \eps/4 \qquad (\alp \notin E_{X,s}),
\]
for some exceptional set $E_{X,s}$ with 
\[
\mu(E_{X,s}) \ll_\eps \frac1{(\log N)^{1+\xi/5}}.
\]
In particular
\[
\sum_{j=1}^\infty \mu(E_{X_j, s \frac{N_j}{N_{j+1}}}) \ll \sum_{j=1}^\infty j^{(\eta-1)(1+\xi/5)} < \infty,
\] provided that $\eta$ is chosen small enough depending on $\xi$. Now the Borel--Cantelli lemma tells us that for almost all $\alp \in \bT$ we have
\[
\Bigl |F(\alp, s \frac{N_j}{N_{j+1}}, X_j) - F^*(\alp, s \frac{N_j}{N_{j+1}}, X_j) \Bigr| \le \eps/4 \qquad (j \ge j_1(\alp, \eps)),
\]
and similarly
\[
\Bigl |F(\alp, s \frac{N_{j+1}}{N_j}, X_{j+1}) - F^*(\alp, s \frac{N_{j+1}}{N_j}, X_{j+1}) \Bigr| \le \eps/4 \qquad (j \ge j_2(\alp, \eps)).
\]

Recalling (\ref{eqn expectation estimation}), the variance estimate (\ref{eqn L2 est explicit}) on $F^*$ combined with Chebyshev's inequality implies that 
\[
\mu \Bigl( \Bigl \{ \alp \in \bT: \Bigl|F^*(\alp, s \frac{N_j}{N_{j+1}}, X_j) - 2s \frac{N_j}{N_{j+1}} \Bigr| > \eps/4 \Bigr\} \Bigr) \ll_\varepsilon (\log N_j)^{-1-\xi/5}.
\]We again apply the Borel--Cantelli lemma, and find that for almost all $\alp \in \bT$ we have
\[
\Bigl|F^*(\alp, s \frac{N_j}{N_{j+1}}, X_j) - 2s \frac{N_j}{N_{j+1}} \Bigr|  \le \eps/4 \qquad (j \ge j_3(\alp, \eps)),
\]
and similarly
\[
\Bigl|F^*(\alp, s \frac{N_{j+1}}{N_j}, X_{j+1}) - 2s \frac{N_{j+1}}{N_j} \Bigr|  \le \eps/4 \qquad (j \ge j_4(\alp, \eps)).
\]

Now the triangle inequality gives, for almost all $\alp \in \bT$ and all $j \ge j_5(\alp, \eps)$,
\[
F(\alp, s \frac{N_j}{N_{j+1}}, X_j) \ge 2s \frac{N_j}{N_{j+1}}  - \eps/2
\]
and
\[
F(\alp, s \frac{N_{j+1}}{N_j}, X_{j+1}) \le 2s \frac{N_{j+1}}{N_j} + \eps/2.
\]
Since 
\[
F(\alp, s \frac{N_j}{N_{j+1}}, X_j) \le \frac N{N_j} F(\alp, s, X)
\]
and
\[
F(\alp, s \frac{N_{j+1}}{N_j}, X_{j+1}) \ge \frac N{N_{j+1}} F(\alp, s, X),
\]
we obtain
\[
\frac{N_j}N \Bigl(2s \frac{N_j}{N_{j+1}} - \eps/2   \Bigr)  \le F(\alp, s, X)  \le \frac{N_{j+1}}N \Bigl(2s \frac{N_{j+1}}{N_j} + \eps/2\Bigr).
\]
As $j$ is large and $N_{j+1}/N_j \to 1$, we conclude as claimed that
\[
|F(\alp, s, X) - 2s| \le \eps.
\]
\end{proof}

Now let us prove Proposition \ref{suffices} in the random setting of Theorem \ref{RandomThm}.
\begin{proof}
For some small $\eta>0$, ultimately depending on $\varepsilon$ and $s$, define the sequence $X_j = \lfloor 2^{j\eta}\rfloor$. Let $X$ be large in terms of $\eps$ and $s$, and suppose that $X_j \le X < X_{j+1}$. We note again the sandwiching inequalities \[
N_j F(\alp, s \frac{N_j}{N_{j+1}}, X_j) \le NF(\alp, s, X) \le N_{j+1} F(\alp, s \frac{N_{j+1}}{N_j}, X_{j+1}).
\]
Observe that \[
 s \frac{N_j}{N_{j+1}}, s \frac{N_{j+1}}{N_j} \asymp 1,
\] and that for $j$ large enough in terms of $\eta$ we have 
\[
 \frac{N_j}{N_{j+1}}\geqslant 2^{-2\eta}, \qquad \frac{N_{j+1}}{N_{j}}\leqslant 2^{2\eta}.
\]

Motivated by Proposition \ref{variance random}, we choose 
\[
T = \log ^2 X.
\]
By Proposition \ref{close random}, we find that if $s\asymp 1$ and $X$ is large in terms of $\eta$ then \[
|F(\alp, s, X) - F^*(\alp, s, X)| \le \eta/4 \qquad (\alp \notin E_{X,s}),
\]
for some exceptional set $E_{X,s}$ with 
\[
\mu(E_{X,s}) \ll_\eta (\log X)^{-2}.\] This means that \[\sum\limits_{j=1}^\infty \mu(E_{X_j,s\frac{N_j}{N_{j+1}}}) \ll_\eta\sum\limits_{j=1}^\infty j^{-2}<\infty.\] We apply the first Borel--Cantelli lemma as in the previous proof, concluding that for almost all $\alpha\in \mathbb{T}$ we have 
\[
\Bigl |F(\alp, s \frac{N_j}{N_{j+1}}, X_j) - F^*(\alp, s \frac{N_j}{N_{j+1}}, X_j) \Bigr| \le \eta/4 \qquad (j \ge j_1(\alp, \eta)),
\]
and similarly
\[
\Bigl |F(\alp, s \frac{N_{j+1}}{N_j}, X_{j+1}) - F^*(\alp, s \frac{N_{j+1}}{N_j}, X_{j+1}) \Bigr| \le \eta/4 \qquad (j \ge j_2(\alp, \eta)).
\]

By Proposition \ref{variance random}, the variance of $F^*$ is bounded above by a constant times \[ (\log X)^{-2} + (\log X)^{-1}(\log\log X)^{-C+1}.\] We absorb the first term, and have by Chebychev's inequality that 
\begin{align*}
&\mu \Bigl(  \Bigl \{ \alp \in \bT: \Bigl|F^*(\alp, s \frac{N_j}{N_{j+1}}, X_j) - 2s \frac{N_j}{N_{j+1}} \Bigr| > \eta/ 4 \Bigr \} \Bigr) \\
&\qquad \ll_\eta (\log X_j)^{-1}(\log\log X_j)^{-C+1}.
\end{align*}
We again apply the Borel--Cantelli lemma, using the fact that \[\sum\limits_{j=1}^\infty (\log X_j)^{-1}(\log\log X_j)^{-C+1} \ll \:\eta^{-1}\sum\limits_{j=1}^\infty j^{-1}(\log \eta j)^{-C+1}<\infty\] since $C>2$. Thus we again find that for almost all $\alp \in \bT$ we have
\[
\Bigl|F^*(\alp, s \frac{N_j}{N_{j+1}}, X_j) - 2s \frac{N_j}{N_{j+1}}\Bigr|  \le \eta/4 \qquad (j \ge j_3(\alp, \eta)),
\]
and similarly
\[
\Bigl|F^*(\alp, s \frac{N_{j+1}}{N_j}, X_{j+1}) - 2s \frac{N_{j+1}}{N_j} \Bigr|  \le \eta/4 \qquad (j \ge j_4(\alp, \eta)).\]

The rest of the proof proceeds as in the deterministic case, reaching the expression 
\[
\frac{N_j}N \Bigl(2s \frac{N_j}{N_{j+1}} - \eta/2\Bigr) \le F(\alp, s, X) \le \frac{N_{j+1}}N \Bigl(2s \frac{N_{j+1}}{N_j} + \eta/2\Bigr).
\]
It is not true in this setting that $N_j/N_{j+1}\rightarrow 1$, but by our earlier observations we may establish \[2^{-2\eta}\Bigl(2s \cdot 2^{-2\eta}-\frac{\eta}{2}\Bigr)\leqslant F(\alpha,s,X)\leqslant 2^{2\eta}\Bigl(2s \cdot 2^{2\eta}+\frac{\eta}{2}\Bigr),\] 
and therefore \[ F(\alpha,s,X) = 2s+ O_s(\eta).\] Choosing $\eta$ small enough, this error is at most $\varepsilon$, thus proving the proposition. 
\end{proof}
To complete the proofs of Theorems \ref{MainThm} and \ref{RandomThm}, it remains to deduce the metric Poissonian property from Proposition \ref{suffices}. Apply Proposition \ref{suffices} to all $s \in \bQ_{> 0}$ simultaneously, with an exceptional set of measure zero. Now let $s$ lie in a short interval $(s_1,s_2)$, where $s_1, s_2 \in \bQ$, and the desired conclusion follows.

\section{The random divergence theory} \label{s7}

In this section we prove Theorem \ref{RandomDivergence}. We keep this brief, as the proof is straightforward by combining section \ref{s5} with the crux of \cite{Wal2017}. 

By Khintchine's theorem \cite[Theorem 2.2]{Har1998}, we know that there is a full measure set $\Omega$ such that for all $\alpha \in \Omega$ there exist arbitrarily large positive integers $M$ such that 
\begin{equation}
\label{Khintchine}
\| M\alp \| < (M \cL(M))^{-1},
\end{equation} where 
\[
\cL(M) = \prod_{i \le 5} \log_i(M).
\] We remind the reader that here $\log_0(M) = M$ and $\log_t(M) = \log (\log_{t-1}(M))$ for $t \in \bN$.  

Now let $\cA$ be as in Theorem \ref{RandomDivergence}. With negligible alteration, the proofs of Lemma \ref{Chernoff lemma} and Corollary \ref{flatrep} demonstrate that the associated quantities $\del$ and $r(n)$ (for $1 \le n \le X/3$ with $X$ large) have the expected order of magnitude. We conclude specifically that, with probability $1$, there exists $X_0$ such that the following hold.
\begin{enumerate}
\item If $X \ge X_0$ then
\[
\frac12 (\log X)^{-1} (\log \log X)^{-C} \le \del \le 2 (\log X)^{-1} (\log \log X)^{-C}.
\]
\item If $X \ge X_0$ and $1 \le n \le X/3$ then $r(n) \ge \frac18 \del N$.
\end{enumerate}

Now let $\alp$ be a member of $\Omega$, and suppose that there exists an $X_0$ satisfying (1) and (2) above. Let $s > 0$ be constant, let $M$ be a large positive integer satisfying (\ref{Khintchine}), and put 
\[
N = \lfloor M \log_3 (M) \rfloor, \qquad K = \log M(\log \log M)^C \log_4 M.
\]
Note that property (1) implies that $X\gg M(\log M) (\log\log M)^C(\log_3 M)$, and in particular that $KM \leqslant X/3$ (for large enough $M$).

By considering those $n$ of the form $n = kM$ ($1 \le k \le K$), and using property (2), we have that
\[
F(N) \ge N^{-1} \sum_{\substack{n \le X/3 \\ \| n \alp \| < s/N}} r(n) \gg \del K \gg \log_4(M).
\] In particular, with probability $1$, for all $\alpha \in \Omega$ there exist arbitrarily large $N$ such that $F(N) > 3s$. Hence, with probability $1$, $\cA$ is not metric Poissonian.

\appendix
\section{The energy in the random setting} \label{EnergyRandom}

In this appendix we include the computation of the additive energy in the random setting of Theorem \ref{RandomThm}. Recall \eqref{energy}. By Cauchy--Schwarz and Corollary \ref{flatrep}, we have $\log N \asymp \log X$ and
\[
E \ge \sum_{n \le X} r(n)^2 \ge X^{-1} \Bigl(\sum_{n \le X} r(n) \Bigr)^2 \asymp \frac{N^4}X \asymp \frac{N^3}{(\log N) \cdot (\log \log N)^C}
\]
almost surely. Moreover, Corollary \ref{flatrep} also yields
\[
E \ll N^2 +\sum_{n \le X} (N^2/X)^2 \ll \frac{N^4}X \asymp \frac{N^3}{(\log N) \cdot (\log \log N)^C}
\]
almost surely. We thus conclude that
\[
E \asymp \frac{N^3}{(\log N) \cdot (\log \log N)^C}
\]
with probability 1.

\providecommand{\bysame}{\leavevmode\hbox to3em{\hrulefill}\thinspace}

\end{document}